\documentclass[letterpaper,11pt]{article}

\usepackage[T1]{fontenc}
\usepackage{lmodern}
\usepackage[letterpaper,left=1in,right=1in,top=1in,bottom=1in]{geometry}
\usepackage{enumerate}
\usepackage{amsthm,amsmath,amsfonts,amssymb}
\usepackage{algorithm}
\usepackage{algorithmic}
\usepackage{mathtools}
\usepackage{needspace}
\usepackage{cite}
\usepackage{setspace}
\usepackage{titlesec}
\usepackage{etoolbox}
\usepackage{xurl}
\usepackage[nopatch=footnote]{microtype}
\usepackage[
  colorlinks=true,
  linkcolor=blue,
  citecolor=blue,
  urlcolor=blue,
  anchorcolor=blue,
  breaklinks=true
]{hyperref}
\allowdisplaybreaks[3]
\titleformat{\section}{\large\bfseries}{\thesection.}{0.6em}{}
\titlespacing*{\section}{0pt}{2.1ex plus 0.5ex minus 0.2ex}{0.9ex plus 0.2ex}
\titleformat{\subsection}{\normalsize\bfseries}{\thesubsection.}{0.6em}{}
\titlespacing*{\subsection}{0pt}{1.7ex plus 0.4ex minus 0.2ex}{0.7ex plus 0.2ex}
\titleformat{\paragraph}[runin]{\normalsize\bfseries}{}{0pt}{}
\titlespacing*{\paragraph}{0pt}{1.3ex plus 0.3ex minus 0.2ex}{0.7em}
\apptocmd{\appendix}{%
  \titleformat{\section}{\large\bfseries}{Appendix~\thesection.}{0.6em}{}%
}{}{}

\makeatletter
\renewcommand{\@maketitle}{%
  \newpage\null\vskip 0.5em
  \begin{center}%
    {\Large\bfseries \@title\par}%
    \ifx\@author\@empty\else
      \vskip 1.0em
      {\normalsize\lineskip .5em
       \begin{tabular}[t]{c}\@author\end{tabular}\par}%
    \fi
    \ifx\@date\@empty\else
      \vskip 0.65em{\small \@date\par}%
    \fi
  \end{center}%
  \par\vskip 0.65em
}
\makeatother
\renewenvironment{abstract}{%
  \par\begingroup\small
  \noindent\textbf{Abstract.}\enspace\ignorespaces
}{\par\endgroup\vspace{0.55\baselineskip}}
\newenvironment{keywords}{%
  \par\begingroup\small
  \noindent\textbf{Key words.}\enspace\ignorespaces
}{\par\endgroup\vspace{0.3\baselineskip}}
\newenvironment{msc}{%
  \par\begingroup\small
  \noindent\textbf{MSC codes.}\enspace\ignorespaces
}{\par\endgroup\vspace{0.4\baselineskip}}

\makeatletter
\renewenvironment{thebibliography}[1]{%
  \par\addvspace{1.5\baselineskip}%
  \begin{center}\normalfont\small\bfseries REFERENCES\end{center}%
  \par\nobreak\vspace{0.2\baselineskip}%
  \fontsize{9}{10.5}\selectfont
  \list{\@biblabel{\@arabic\c@enumiv}}{%
    \settowidth\labelwidth{\@biblabel{#1}}%
    \setlength{\labelsep}{0.6em}%
    \setlength{\leftmargin}{\labelwidth}%
    \addtolength{\leftmargin}{\labelsep}%
    \setlength{\itemindent}{0pt}%
    \setlength{\itemsep}{0.8pt plus 0.2pt minus 0.2pt}%
    \setlength{\parsep}{0pt}%
    \setlength{\topsep}{0pt}%
    \setlength{\partopsep}{0pt}%
    \usecounter{enumiv}%
    \let\p@enumiv\@empty
    \renewcommand\theenumiv{\@arabic\c@enumiv}}%
  \sloppy\clubpenalty=10000\widowpenalty=10000
  \sfcode`\.=1000\relax
}{%
  \def\@noitemerr{\@latex@warning{Empty `thebibliography' environment}}%
  \endlist
}
\makeatother

\theoremstyle{plain}
\newtheorem{theorem}{Theorem}[section]
\newtheorem{lemma}[theorem]{Lemma}
\newtheorem{proposition}[theorem]{Proposition}
\newtheorem{corollary}[theorem]{Corollary}

\theoremstyle{definition}

\theoremstyle{remark}
\newtheorem{remark}[theorem]{Remark}

\newcommand{\RR}{\mathbb{R}}

\newcommand{\EE}{\mathbb{E}}

\newcommand{\cv}{\mathrm{conv}}
\newcommand{\dist}{\mathrm{dist}}

\newcommand{\df}{\mathrm{d}}

\newcommand{\PP}{\mathbb{P}}
\newcommand{\BB}{\mathbb{B}}

\newcommand{\bx}{\mathbf{x}}

\newcommand{\by}{\mathbf{y}}
\newcommand{\bw}{\mathbf{w}}

\newcommand{\bg}{\mathbf{g}}
\newcommand{\bz}{\mathbf{z}}
\newcommand{\bv}{\mathbf{v}}
\newcommand{\bu}{\mathbf{u}}

\newcommand{\bq}{\mathbf{q}}
\newcommand{\ba}{\mathbf{a}}

\newcommand{\norm}[1]{\|#1\|}

\newcommand{\interior}{\mathrm{int}}

\newcommand{\sphere}{\mathbb{S}}

\DeclareMathOperator{\Lip}{Lip}

\newcommand{\ip}[2]{\left\langle #1,#2\right\rangle}

\numberwithin{equation}{section}

\hypersetup{
  pdftitle={Exponential Deterministic Query Complexity of Goldstein Stationarity},
  pdfauthor={Yixi Ding and Ya-Xiang Yuan},
  pdfsubject={Deterministic local-oracle complexity and accuracy-dependent Goldstein stationarity bounds},
  pdfkeywords={Goldstein stationarity, deterministic oracle complexity, nonsmooth nonconvex optimization, local oracles},
  bookmarksnumbered=true
}

\title{Exponential Deterministic Query Complexity\\of Goldstein Stationarity}
\author{Yixi Ding\thanks{LSEC, ICMSEC, Academy of Mathematics and Systems Science, Chinese Academy of Sciences, Beijing 100190, China; University of Chinese Academy of Sciences, Beijing 100049, China. Email: \texttt{dingyixi@amss.ac.cn}.}\qquad Ya-Xiang Yuan\thanks{LSEC, ICMSEC, Academy of Mathematics and Systems Science, Chinese Academy of Sciences, Beijing 100190, China. Email: \texttt{yyx@lsec.cc.ac.cn}.}}
\date{}

\begin{document}

\setcounter{footnote}{1}
\maketitle

\begin{abstract}
We study the deterministic query complexity of finding approximate Goldstein stationary points of globally Lipschitz functions that may be nonsmooth and nonconvex. Under prescribed bounds on the Lipschitz constant and on the difference between the initial function value and the infimum, we prove a lower bound on the worst-case number of oracle calls that is exponential in the dimension at fixed sufficiently small accuracy parameters. The result holds for a local oracle and gives explicit dependence on the accuracy parameters. A complementary deterministic algorithm using only function values gives an exponential upper bound, establishing the exponential order in the dimension in this regime. For a coarser stationarity requirement, we also give a deterministic first-order algorithm with a dimension-free query bound. Thus, the dimension dependence differs between the two accuracy regimes.
\end{abstract}

\begin{keywords}
Goldstein stationarity, deterministic oracle complexity, nonsmooth nonconvex optimization, local oracles
\end{keywords}
\begin{msc}
90C60, 90C56, 68Q25
\end{msc}

\section{Introduction}
\label{sec:introduction}

Nonsmooth nonconvex optimization problems arise, for example, in piecewise affine models and neural networks with nonsmooth activation functions \cite{DNN,nogas}. For these problems, approximate stationarity provides a target that is weaker than global optimality. Unlike in smooth optimization, where a small gradient norm gives a natural measure of stationarity, the nonsmooth setting requires care in choosing a stationarity criterion for which finite-time guarantees are possible.

We consider a globally Lipschitz objective $f:\RR^d\to\RR$ that is bounded below, without imposing convexity or smoothness. For objectives with a Lipschitz continuous gradient, gradient descent admits finite bounds on the number of gradient evaluations needed to obtain a small gradient norm. These bounds are independent of $d$ when the gradient Lipschitz constant, initial function-value gap, and required accuracy are fixed \cite{nogas,DNN}. For a merely Lipschitz objective, however, asking for a small Clarke subgradient at the reported point can preclude a uniform finite-query guarantee over the function class \cite{CFS,KS33}. Thus, simply replacing the gradient by a generalized derivative does not resolve the computational difficulty. Goldstein's stationarity condition relaxes this pointwise requirement by allowing a convex combination of subgradients at nearby points \cite{Goldstein}. More precisely, a point $\bx$ is $(\delta,\epsilon)$-Goldstein stationary if a convex combination of Clarke subgradients at points within distance $\delta>0$ of $\bx$ has norm at most $\epsilon>0$. Thus, $\delta$ determines the neighborhood from which subgradients are taken, whereas $\epsilon$ sets the allowable norm of such a convex combination. Section~\ref{sec:model} gives the formal definitions.

This condition has a direct connection to descent. Goldstein's method uses a minimum-norm element of the convex hull of nearby subgradients: either its norm is small, or its negative direction yields a decrease in the objective \cite{Goldstein}. The difficulty is to obtain enough information to choose such a direction from finitely many oracle calls. A first-order oracle at one point does not supply the convex hull of subgradients throughout a neighborhood. Consequently, a bound on the number of these conceptual descent steps does not by itself give a bound on oracle queries. Randomized first-order algorithms address this difficulty. For any fixed failure probability $\gamma\in(0,1)$, they can find a $(\delta,\epsilon)$-Goldstein stationary point with probability at least $1-\gamma$ using $\widetilde O(\delta^{-1}\epsilon^{-3})$ oracle calls, under fixed Lipschitz and initial-gap bounds; see Davis et al.\ \cite[Theorem~6]{Davis22} and Tian, Zhou, and So \cite[Theorem~3.4]{TS22}. Here $\widetilde O$ suppresses logarithmic factors. Thus, for any fixed success probability below one, the query complexity is polynomially bounded in $d$, $\delta^{-1}$, and $\epsilon^{-1}$, and is in fact independent of $d$. These guarantees use exact first-order information and do not require a Lipschitz continuous gradient.

For deterministic first-order algorithms, Kornowski and Shamir \cite{KS22} and Jordan et al.\ \cite{DNN} prove lower bounds on the worst-case number of oracle calls that grow linearly with $d$ at sufficiently small fixed accuracy parameters. Tian and So \cite{nogas} obtain a dimension-dependent lower bound even with access to a more informative local oracle. These results establish a limitation of deterministic algorithms, rather than only of a particular way to remove random sampling from an existing method. They do not, however, determine whether polynomial dependence on $d$ is possible. Jordan et al.\ \cite{DNN} asked whether a deterministic first-order algorithm can find a $(\delta,\epsilon)$-Goldstein stationary point with a number of oracle calls polynomial jointly in $d$, $\delta^{-1}$, and $\epsilon^{-1}$. At fixed accuracy parameters, the distinction between polynomial and exponential dimension dependence remains important even after a dimension-free guarantee has been ruled out. Our results address this question for the general Lipschitz function class.

\subsection{Contributions}
\label{sec:intro-results}

Let $L>0$ bound the Lipschitz constant, and let $\Delta>0$ bound the initial function-value gap $f(\bx_0)-\inf f$ at a prescribed initial point $\bx_0$. Our results describe how deterministic query complexity depends on the dimension and the accuracy parameters. For $0<\delta<\Delta/L$ and $0<\epsilon<L/3$, Theorem~\ref{thm:main} gives an explicit query complexity lower bound that is exponential in $d$ when $L,\Delta,\delta,\epsilon$ are fixed. It applies to every deterministic algorithm using the local oracle defined in Tian and So \cite{nogas}. Query locations are unrestricted, and the output need not have been queried. In particular, the result rules out a guarantee polynomial jointly in $d$, $\delta^{-1}$, and $\epsilon^{-1}$. The proof uses a resisting-oracle construction that accommodates exponentially many query points; Sections~\ref{sec:construction}--\ref{sec:replay} develop the construction and the argument.

A complementary deterministic algorithm uses only function values and gives an upper bound on the number of calls sufficient to find a $(\delta,\epsilon)$-Goldstein stationary point for every admissible objective (Theorem~\ref{thm:upper}). Together, the bounds give $\exp(\Theta(d))$ query complexity for each fixed choice of $L,\Delta,\delta,\epsilon$ in the range above; see Corollary~\ref{cor:theta-d} and the discussion following it. This determines the exponential order in $d$, not the exact exponential base or the optimal joint dependence on dimension and accuracy.

The conclusion changes when the stationarity condition permits a larger value of $\epsilon$. For any $\delta>0$ and $L/\sqrt2<\epsilon<L$, Proposition~\ref{prop:coarse-upper} gives a deterministic first-order algorithm with a query bound independent of $d$, without imposing smoothness. Here the first-order oracle returns a function value and one Clarke subgradient at each query. We refer to this range of $\epsilon$ as coarse accuracy. Our bounds do not determine the dimension dependence for the intermediate range $1/3\le\epsilon/L\le1/\sqrt2$.

\subsection{Related work}
\label{sec:related-work}

\paragraph{Randomized algorithms and stochastic oracles.}
Burke, Lewis, and Overton \cite{BLO2005} used convex combinations of sampled gradients and proved convergence under their regularity assumptions; Kiwiel \cite{Kiwiel2007} strengthened the analysis and studied a modified algorithm. Zhang et al.\ \cite{CFS} obtained a dimension-free finite-time bound under Hadamard directional differentiability and a direction-dependent first-order oracle. Davis et al.\ \cite{Davis22} and Tian, Zhou, and So \cite{TS22} developed randomized procedures for general Lipschitz functions using exact values and gradients at differentiability points. For zeroth-order information, Lin, Zheng, and Jordan \cite{LZJ2022} related uniform smoothing to the Goldstein subdifferential and developed randomized methods based on function values.

Random choices made by an algorithm are distinct from randomness in a stochastic oracle, which supplies sampled or noisy objective information. Cutkosky, Mehta, and Orabona \cite{O2NC} improved first-order query bounds through an online-to-non-convex conversion using unbiased stochastic gradients with bounded variance. Their neighborhood-stationarity condition requires the nearby points to average to the reported point and implies Goldstein stationarity for Lipschitz objectives. Kornowski and Shamir \cite{KS24} obtained a zeroth-order query bound linear in $d$ for stochastic objectives accessed through sampled function values. The zeroth-order upper bound established below instead uses a deterministic algorithm and exact function values.

\paragraph{Deterministic algorithms and local information.}
Tian and So \cite{nogas} also prove a separate impossibility result for general zero-respecting algorithms, which do not explore coordinates along which the objective is locally constant near all preceding query points. Jordan et al.\ \cite{DNN} show that removing function values precludes a uniform finite deterministic guarantee. Deterministic algorithms for smooth functions with logarithmic dependence on the gradient Lipschitz constant are given in \cite{KS22,DNN}. The present lower bound assumes no such smoothness parameter and places no restriction on query locations.

\paragraph{Concurrent work.}
After we finished our work, we noticed there is a recent paper by Kornowski \cite{Kornowski2026}, which also establishes a lower bound on deterministic query complexity that is exponential in the dimension for Goldstein stationarity under a local oracle.

\paragraph{Other local targets and verification.}
Kornowski and Shamir \cite{KS33} prove a query complexity lower bound, exponential in the dimension, for approaching a point with a small Clarke subgradient, even with randomization and local information. That target is stronger than Goldstein stationarity: a convex combination of nearby subgradients can have small norm even when none of its constituent subgradients does. Kornowski, Padmanabhan, and Shamir \cite{HMLG} establish exponential difficulties for local guarantees formulated in terms of function values. Kalavasis, Stavropoulos, and Zampetakis \cite{Verification2024} study verification complexity and derive search consequences, including an exponential lower bound on the number of zeroth-order queries for \emph{uniform Goldstein stationarity}. Their result for ordinary Goldstein stationarity concerns the resolution of lattice-restricted queries. These results concern different combinations of stationarity target, oracle information, and query restrictions.

\paragraph{DC functions and quantitative assumptions.}
Kong and Lewis \cite{KongLewis} give a deterministic method for a class that includes differences of convex (DC) functions. Their guarantee depends on a directional oracle and an objective-specific measure of nonconvexity. The functions constructed in our lower-bound proof are also DC, so the comparison requires attention to the quantitative assumptions and the oracle information, not only to DC membership. Section~\ref{sec:scope} and Appendix~\ref{app:dc} give this comparison.

\Needspace{7\baselineskip}
\paragraph{Organization.}
Section~\ref{sec:model} defines the problem and oracle model and states the complexity bounds. Section~\ref{sec:construction} gives an overview of the construction and proves its geometric and interpolation properties. Section~\ref{sec:replay} then chooses the scalar parameters and proves the query complexity lower bound. Section~\ref{sec:upper} proves the upper bounds, and Section~\ref{sec:scope} discusses their scope and remaining questions. The appendices cover initialization and stopping, finite direction-net construction, DC structure, and a non-weakly-convex example.

\section{Problem formulation and complexity bounds}
\label{sec:model}

We recall the stationarity criterion, specify the function class and oracle model, and state the query complexity bounds.

\subsection{Notation and nonsmooth preliminaries}

All vector norms are Euclidean, and $\ip{\bx}{\by}=\bx^\top\by$. For $r>0$ and $\bx\in\RR^d$, write
\[
\BB_r(\bx)=\{\by\in\RR^d:\norm{\by-\bx}\le r\},
\qquad
\BB_r^\circ(\bx)=\{\by\in\RR^d:\norm{\by-\bx}<r\}.
\]
The symbols $\sphere^{d-1}$, $\mathbf0$, $\mathbf e_j$, and $\mathbf I$ denote the unit sphere, the zero vector, the standard basis vectors, and the identity matrix, respectively. The orthogonal complement of $\bu$ is denoted by $\bu^\perp$. For a scalar $a$, write $a_+=\max\{a,0\}$. The symbols $\cv$, $\operatorname{int}$, and $\arg\min f$ denote convex hull, interior, and the set of global minimizers of $f$, respectively. For a finite set $S$, its cardinality is $|S|$. The notation $\Lip(f)$ denotes the least global Lipschitz constant. All logarithms are natural, and $\lfloor a\rfloor$ denotes the greatest integer not exceeding $a$. The notation $\exp(\Theta(d))$ means upper and lower exponential bounds with positive exponent constants independent of $d$.

For a locally Lipschitz function $f:\RR^d\to\RR$, a point $\bx\in\RR^d$, and a direction $\bv\in\RR^d$, its Clarke generalized directional derivative $f^\circ(\bx;\bv)$ and Clarke subdifferential $\partial f(\bx)$ are defined by
\begin{equation}
\begin{aligned}
f^\circ(\bx;\bv)
&=
\limsup_{\substack{\by\to\bx\\t\downarrow0}}
\frac{f(\by+t\bv)-f(\by)}{t},\\
\partial f(\bx)
&=
\{\bg\in\RR^d:
\ip{\bg}{\bv}\le f^\circ(\bx;\bv)
\text{ for every }\bv\in\RR^d\}.
\end{aligned}
\label{eq:clarke-definition}
\end{equation}
An unsubscripted $\partial$ always means this Clarke subdifferential.

The following three consequences of \eqref{eq:clarke-definition} are used in the construction. If $f=h$ on an open neighborhood of $\bx$, then their difference quotients coincide locally, giving
\begin{equation}
f^\circ(\bx;\bv)=h^\circ(\bx;\bv)
\quad\text{for every }\bv,
\qquad
\partial f(\bx)=\partial h(\bx).
\label{eq:locality}
\end{equation}
For $c>0$, positive scaling commutes with the limsup and the subdifferential inequalities, giving
\begin{equation}
\partial(cf)(\bx)=c\,\partial f(\bx).
\label{eq:positive-scaling}
\end{equation}
For an affine function $h(\bx)=\ip{\ba}{\bx}+b$, where $\ba\in\RR^d$ and $b\in\RR$,
\begin{equation}
\partial h(\bx)=\{\ba\}.
\label{eq:affine-subdifferential}
\end{equation}

For $\delta>0$, define the Goldstein subdifferential by
\begin{equation}
\partial_\delta f(\bx)
=
\cv\left(
\bigcup_{\by\in\BB_\delta(\bx)}\partial f(\by)
\right).
\label{eq:goldstein-definition}
\end{equation}
For $\epsilon>0$, a point $\bx$ is $(\delta,\epsilon)$-Goldstein stationary if
\begin{equation}
\dist(\mathbf0,\partial_\delta f(\bx))
:=
\inf_{\bg\in\partial_\delta f(\bx)}\norm{\bg}
\le\epsilon.
\label{eq:stationarity}
\end{equation}
Thus, $\delta$ is a neighborhood radius, while $\epsilon$ bounds the distance from the origin to $\partial_\delta f(\bx)$. We follow the parameter order of Jordan et al.\ \cite{DNN}. The same condition is called $(\epsilon,\delta)$-Goldstein approximate stationarity, or $(\epsilon,\delta)$-GAS, by Tian and So \cite[Definition~3]{nogas}. The upper-bound argument will use a minimum-norm element of this set.

\begin{lemma}[Compactness of the Goldstein subdifferential]
\label{lem:compactness}
If $f$ is globally $L$-Lipschitz, then $\partial_\delta f(\bx)$ is nonempty, compact, and convex, and is contained in $\BB_L(\mathbf0)$, for every $\bx\in\RR^d$ and $\delta>0$.
\end{lemma}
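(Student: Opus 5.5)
The plan is to first establish the corresponding facts for the pointwise Clarke subdifferential, then show that the union $U=\bigcup_{\by\in\BB_\delta(\bx)}\partial f(\by)$ is compact, and finally use that the convex hull of a compact set in $\RR^d$ is compact.

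\emph{Step 1: the pointwise subdifferential.} Since $f$ is globally $L$-Lipschitz, every difference quotient in \eqref{eq:clarke-definition} satisfies $|f(\by+t\bv)-f(\by)|/t\le L\norm{\bv}$. Hence $f^\circ(\bx;\bv)\le L\norm{\bv}$. The function $\bv\mapsto f^\circ(\bx;\bv)$ is finite, positively homogeneous and subadditive (subadditivity comes from splitting the difference quotient through the intermediate point $\by+t\bv$). By the Hahn--Banach theorem, or the standard support-function characterization, $\partial f(\bx)$ is nonempty. Each defining inequality $\ip{\bg}{\bv}\le f^\circ(\bx;\bv)$ cuts out a closed half-space, so $\partial f(\bx)$ is closed and convex. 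Taking $\bv=\bg$ gives $\norm{\bg}^2\le L\norm{\bg}$, so $\norm{\bg}\le L$, and $\partial f(\bx)$ is compact and contained in $\BB_L(\mathbf0)$. This is the standard Clarke theory, and I would cite it briefly rather than reprove it in full.

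\emph{Step 2: compactness of the union.} From Step 1, $U$ is nonempty and contained in $\BB_L(\mathbf0)$, so it is bounded, and it remains to show that $U$ is closed. Take $\bg_k\in\partial f(\by_k)$ with $\by_k\in\BB_\delta(\bx)$ and $\bg_k\to\bg$. After passing to a subsequence, $\by_k\to\by\in\BB_\delta(\bx)$. The key fact is that $(\by,\bv)\mapsto f^\circ(\by;\bv)$ is upper semicontinuous, which follows directly from the limsup definition by a diagonal argument. So for each $\bv$,
\[
\ip{\bg}{\bv}=\lim_k\ip{\bg_k}{\bv}\le\limsup_k f^\circ(\by_k;\bv)\le f^\circ(\by;\bv).
\]
Hence $\bg\in\partial f(\by)\subseteq U$, so $U$ is closed and therefore compact.

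\emph{Step 3: the convex hull.} In $\RR^d$ the convex hull of a compact set is compact. By Carathéodory's theorem it is the image of the compact set $U^{d+1}\times\{\lambda\in\RR^{d+1}:\lambda\ge0,\ \sum_i\lambda_i=1\}$ under a continuous map. The hull is convex by construction, and it is nonempty because $U$ is. It lies in $\BB_L(\mathbf0)$ because that ball is convex and contains $U$.

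The main obstacle is Step 2, namely the closedness of the union, which relies on the upper semicontinuity of $f^\circ$. To prove it, I would fix $\epsilon>0$ and, for each $k$, choose $\bz_k$ and $t_k$ with $\norm{\bz_k-\by_k}<1/k$ and $t_k<1/k$ whose difference quotient is within $1/k$ of $f^\circ(\by_k;\bv)$. Since $\bz_k\to\by$ and $t_k\downarrow0$, these quotients are admissible in the limsup defining $f^\circ(\by;\bv)$. This bounds $\limsup_k f^\circ(\by_k;\bv)$ by $f^\circ(\by;\bv)$.
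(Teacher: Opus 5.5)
Your proposal is correct and follows the same route as the paper: you establish the pointwise Clarke properties, then compactness of the union over the closed ball via closedness of the subdifferential graph and compactness of $\BB_\delta(\bx)$, then compactness of the convex hull in finite dimensions. The only difference is that you verify graph closedness explicitly, using upper semicontinuity of $f^\circ$, and hull compactness via Carath\'eodory. The paper takes these as standard facts, phrasing the second step as projecting the compact graph onto the subgradient coordinate.
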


\begin{proof}
The Clarke subdifferential is nonempty and compact and lies in $\BB_L(\mathbf0)$ \cite[Proposition~2.1.2]{Clarke1990}. The graph over $\BB_\delta(\bx)$ is a closed subset of the compact product $\BB_\delta(\bx)\times\BB_L(\mathbf0)$ and is therefore compact. Projecting onto the subgradient coordinate shows that $\bigcup_{\by\in\BB_\delta(\bx)}\partial f(\by)$ is nonempty and compact. In finite dimensions, its convex hull is compact and remains in $\BB_L(\mathbf0)$. See also \cite[Section~2]{Gebken2025} and \cite[Proposition~2.3 and Corollary~2.13]{MankauSchuricht2019}.
\end{proof}

Consequently, the infimum in \eqref{eq:stationarity} is attained, and no additional closure is needed in \eqref{eq:goldstein-definition}.

\subsection{Function class, algorithms, and query counts}

Fix a prescribed initial point $\bx_0\in\RR^d$. For $L,\Delta>0$, let
\begin{equation}
\mathcal F_d(L,\Delta;\bx_0)
=
\left\{
f:\RR^d\to\RR:
\begin{array}{l}
|f(\bx)-f(\by)|\le L\norm{\bx-\by}
\quad\text{for all }\bx,\by,\\
\inf_{\bx\in\RR^d}f(\bx)>-\infty,\\
f(\bx_0)-\inf_{\bx\in\RR^d}f(\bx)\le\Delta
\end{array}
\right\}.
\label{eq:function-class}
\end{equation}
Thus $L$ is an upper bound, not necessarily the least Lipschitz constant. No convexity, smoothness, weak convexity, or representation-size assumption is imposed.

A query is one call to an oracle at a specified point. The algorithm receives
\[
d,\quad L,\quad\Delta,\quad\bx_0,\quad\delta,\quad\epsilon.
\]
Its first oracle call is at $\bx_0$. Following the local-oracle definition in Tian and So \cite[Section~2.3 and Remark~1]{nogas}, an admissible response to a query at $\bq$ is any locally Lipschitz function $h_{\bq}:\RR^d\to\RR$ satisfying
\begin{equation}
\mathcal O_f(\bq)=h_{\bq},\qquad
h_{\bq}(\by)=f(\by)\quad
\text{for all }\by\in\BB_{\nu_{\bq}}(\bq),
\qquad \nu_{\bq}>0.
\label{eq:oracle}
\end{equation}
The radius $\nu_{\bq}$ is not returned. Two oracle responses provide the same local information at $\bq$ if they agree on an open neighborhood of $\bq$. Neither response specifies a ball of known radius on which agreement with the objective is guaranteed.

A deterministic algorithm in this model may use all local information supplied by the oracle. Its internal computation, next query, stopping decision, and output depend only on the input and the local information in the previous oracle responses. In particular, replacing any finite collection of responses by functions that agree with them near the corresponding query points cannot change the subsequent computation before a new response is received. The neighborhoods need not have the same radius: the minimum of finitely many positive radii gives the common-radius condition in \cite{nogas}. Information encoded solely outside these neighborhoods is not available to the algorithm.

Query locations are unrestricted in $\RR^d$, repeated queries are allowed, and the output need not be a queried point. The pair $(f(\bq),\partial f(\bq))$ depends only on the local information at $\bq$. An algorithm using function values and the entire Clarke subdifferential is therefore a special case. The same applies to local derivative information whenever it is defined. For a fixed objective, replacing responses by locally agreeing functions induces the same execution. The first-order upper bound in Section~\ref{sec:coarse} is stated for an oracle returning a function value and one Clarke subgradient per call; the zeroth-order upper bound uses only function values.

Every oracle call is counted, including the initial one. Arbitrary finite internal computation with exact real information is allowed between calls; in particular, real arithmetic and comparisons are exact. An execution that computes forever without returning fails the finite-return requirement; no procedure for detecting such behavior is assumed. These are query-complexity conventions, not bounds on bit complexity or total running time. Finite preprocessing is allowed without any bound on its cost.

For a deterministic algorithm $A$, a valid output is a $(\delta,\epsilon)$-Goldstein stationary point. If the execution returns such a point in finite time, let $\tau_A(f;\bx_0,\delta,\epsilon)$ be its total number of calls. If it never returns or returns an invalid point, set
\[
\tau_A(f;\bx_0,\delta,\epsilon)=+\infty.
\]
Dependence on the fixed input parameters $d,L,\Delta$ is suppressed in this notation. The optimal worst-case query complexity in this local model is
\begin{equation}
\mathcal{T}_{\rm det}(d,L,\Delta,\delta,\epsilon;\bx_0)
=
\inf_A\
\sup_{f\in\mathcal F_d(L,\Delta;\bx_0)}
\tau_A(f;\bx_0,\delta,\epsilon).
\label{eq:complexity}
\end{equation}
The infimum is over deterministic algorithms in the model above. A \emph{uniform $T$-call guarantee} means that the same algorithm satisfies $\tau_A(f;\bx_0,\delta,\epsilon)\le T$ for every admissible $f$. Eventual success after more than $T$ calls gives a finite value $\tau_A>T$, not $+\infty$. Receiving the $T$th response still permits a return within budget; requesting another call does not. Appendix~\ref{app:model} treats algorithm-chosen initial points and optional pre-query returns.

\subsection{Main theorems}

The lower bound uses the local oracle defined above, whereas the upper bound needs only values. Their comparison at fixed accuracy determines the exponential order in $d$.

\Needspace{18\baselineskip}
\begin{theorem}[Accuracy-dependent query complexity lower bound]
\label{thm:main}
Let $d\ge3$, $L,\Delta>0$, $\bx_0\in\RR^d$, and
\[
0<\delta<\frac{\Delta}{L},\qquad 0<\epsilon<\frac L3.
\]
For every integer $T\ge1$ satisfying
\begin{equation}
T+1\le\left(\frac{L-\epsilon}{2\epsilon}\right)^{(d-2)/2},
\label{eq:budget}
\end{equation}
and every deterministic algorithm $A$ in the local-oracle model above, there exists a fixed $f\in\mathcal F_d(L,\Delta;\bx_0)$ with $\tau_A(f;\bx_0,\delta,\epsilon)>T$. Every point actually queried among the first $T$ calls, and the output if $A$ returns within $T$ calls, satisfies
\[
\dist(\mathbf0,\partial_\delta f(\bq))>\epsilon.
\]
Moreover, $f(\bx_0)=0$, $\inf f=-\Delta$, and $\interior(\arg\min f)\ne\varnothing$.
\end{theorem}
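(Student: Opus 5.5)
The plan is a resisting-oracle (adversary) argument over a finite family $\{f_{\bu}\}_{\bu\in U}\subset\mathcal F_d(L,\Delta;\bx_0)$. All members of the family agree with one \emph{reference function} $f_{\rm ref}$ outside a thin region $R_{\bu}$ attached to each hidden index $\bu$. The family should satisfy the following four properties.

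\begin{enumerate}[(i)]
\item \textbf{Reference function.} $f_{\rm ref}$ is $L$-Lipschitz and $(\delta,\epsilon)$-Goldstein non-stationary at every point of $\RR^d$.
\item \textbf{Local agreement and disjointness.} Each $f_{\bu}$ equals $f_{\rm ref}$ on an open neighborhood of every point outside $R_{\bu}$. The $\delta$-fattened regions $R_{\bu}+\BB_\delta(\mathbf0)$, $\bu\in U$, are pairwise disjoint.
\item \textbf{Hidden minimizer.} Inside $R_{\bu}$, far from $\bx_0$, $f_{\bu}$ descends to a flat floor at level $-\Delta$. This gives $\inf f_{\bu}=-\Delta$, $f_{\bu}(\bx_0)=0$ and $\interior(\arg\min f_{\bu})\ne\varnothing$.
\item \textbf{Cardinality.} $|U|\ge\left(\frac{L-\epsilon}{2\epsilon}\right)^{(d-2)/2}$.
\end{enumerate}

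\textbf{Concrete choice.} I would take $f_{\rm ref}(\bx)=-a\ip{\mathbf e_d}{\bx-\bx_0}$ with slope $a\in(\epsilon,L)$. Then \eqref{eq:affine-subdifferential} gives $\partial_\delta f_{\rm ref}\equiv\{-a\mathbf e_d\}$, so (i) holds. The index set $U$ would be a packing of directions $\bu$ in a $(d-2)$-dimensional sphere inside $\mathbf e_d^\perp$, and $R_{\bu}$ a narrow cone or tube about a ray tilted toward $\bu$. Inside $R_{\bu}$, $f_{\bu}$ would be an interpolation of the form
\[
\max\bigl\{-\Delta,\ \min\{f_{\rm ref},\,\ell_{\bu}\}\bigr\},
\]
where $\ell_{\bu}$ is an affine or conic piece that bends the descent toward $\bu$ while keeping $\Lip(f_{\bu})\le L$. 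The Clarke subdifferentials of $f_{\bu}$ are then computed piecewise, using \eqref{eq:locality} and \eqref{eq:positive-scaling}.

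\textbf{Adversary.} Fix $A$. Run it with the reference responses $\mathcal O(\bq)=f_{\rm ref}$ for up to $T$ calls, and collect the query points together with the output, if any: at most $T+1$ points. By disjointness in (ii), each such point has a $\delta$-ball meeting the fattened region of at most one index. Since $T+1\le|U|$ would only exhaust the family, I will need $|U|\ge T+2$, or equivalently a count showing that each point "kills" at most one index and some index survives. I will adjust the packing constant so that (iv) yields $|U|>T+1$ from \eqref{eq:budget}; otherwise I will use that the initial query $\bx_0$ kills nothing.

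Pick a surviving $\bu$. By (ii), $f_{\bu}$ agrees with $f_{\rm ref}$ near every queried point. So by the model's replay property, $A$ behaves identically on $f_{\bu}$: the same queries and the same output. Every such point $\bq$ has $\BB_\delta(\bq)$ disjoint from $R_{\bu}$, so $\partial_\delta f_{\bu}(\bq)=\{-a\mathbf e_d\}$, which has norm $a>\epsilon$. Hence $\tau_A(f_{\bu})>T$ and all the stated distance inequalities hold. The condition $\delta<\Delta/L$ lets the floor sit far enough along $R_{\bu}$ that $f_{\bu}(\bx_0)=0$ is compatible with $\Lip(f_{\bu})\le L$.

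\textbf{Main obstacle: the geometry.} The hard step is reconciling three requirements: disjoint $\delta$-fattened regions for exponentially many directions, Lipschitz constant at most $L$, and reaching depth $\Delta$. The cardinality in (iv) will come from a spherical-cap packing in the $(d-2)$-sphere with angular radius $\theta$. The natural constraint is $\sin\theta\approx\sqrt{2\epsilon/(L-\epsilon)}$, since the extra slope needed to steer toward $\bu$ must fit within $L$. This gives $(1/\sin\theta)^{d-2}=((L-\epsilon)/(2\epsilon))^{(d-2)/2}$, and the hypothesis $\epsilon<L/3$ ensures this ratio exceeds $1$. I would first try cones with apex near $\bx_0$, made disjoint after a distance large compared with $\delta$. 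Near the apex, where the cones overlap, I would keep $f_{\bu}=f_{\rm ref}$, and let the bending start only beyond the separation radius.
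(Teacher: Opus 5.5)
There is a genuine gap, and it comes before any geometry: properties (i)--(iii) cannot hold together. By Lemma~\ref{lem:goldstein-descent}, an $L$-Lipschitz function that is $(\delta,\epsilon)$-nonstationary at every point can be decreased by more than $\delta\epsilon$ with a step of length $\delta$ from every point, so it is unbounded below. Hence $W=\{f_{\mathrm{ref}}<-\Delta\}$ is a nonempty open set; for your choice it is the half-space $\ip{\mathbf e_d}{\bx-\bx_0}>\Delta/a$. Every $f_\bu$ satisfies $f_\bu\ge-\Delta$, so $f_\bu\ne f_{\mathrm{ref}}$ on $W$. Then (ii) forces $W\subseteq R_\bu$ for every $\bu\in U$, which contradicts disjointness once $|U|\ge2$. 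Suppose you instead let the truncation $\max\{-\Delta,\cdot\}$ act globally, so the common visible function is $\max\{-\Delta,f_{\mathrm{ref}}\}$. Then the inheritance step of your adversary, $\partial_\delta f_\bu(\bq)=\{-a\mathbf e_d\}$ whenever $\BB_\delta(\bq)$ misses $R_\bu$, is false. For example, let $A$ make the initial call, take the revealed gradient $\bg_0=-a\mathbf e_d$, and return $\widehat\bx=\bx_0-t\bg_0/\norm{\bg_0}$ with $t>\Delta/a+\delta$. For every surviving index $\bu$, $f_\bu$ is locally constant at each point of $\BB_\delta(\widehat\bx)$, so $\partial_\delta f_\bu(\widehat\bx)=\{\mathbf0\}$ and the output is valid after one call. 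Inheritance needs a visible reference that is nowhere stationary, and such a function is unbounded below. So the obstruction is structural; tuning the cones or the packing constant cannot remove it.

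The missing idea is that the oracle responses must not be restrictions of one global function. The paper answers every query $\bq$ with the re-centered germ $\ell_\bq(\bx)=(L/L_F)\mathbf e_1^\top(\bx-\bq)$. Every observed value is therefore $0$, and the history never shows a decrease. No global function realizes all these germs, but any history of at most $T+1$ points, including an unqueried output, can be realized. After the history is fixed, Lemma~\ref{lem:direction} picks a hidden direction $\bu$ with $\ip{\bu}{\mathbf e_1}=\alpha$ and $|\ip{\bu}{\bq_i-\bq_j}|\le\gamma\norm{\bq_i-\bq_j}$, using a Gaussian tail bound and a union bound over pairwise differences. Through Lemma~\ref{lem:parameter-choice}, this is where \eqref{eq:budget} comes from; your cap-packing count is reverse-engineered from that bound rather than derived. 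The interpolant $F=\min_i\{\mathbf e_1^\top(\bx-\bq_i)+\beta(\norm{\mathbf P(\bx-\bq_i)}-r)_+\}$, with $\mathbf P=\mathbf I-\bu\bu^\top$, reproduces each germ near $\bq_i$. It also satisfies $F(\bx+t\bu)=F(\bx)+\alpha t$, which forces $\ip{\bg}{\bu}=\alpha$ for every Clarke subgradient of $F$. This keeps $\partial_\delta f(\bq_i)$ at distance at least $L\alpha/L_F>\epsilon$ from $\mathbf0$, even when $\BB_\delta(\bq_i)$ crosses branch switches. The descent to the floor at $-\Delta$ happens along the hidden direction $-\bu$, and $\delta<\Delta/L$ keeps the truncation inactive near each $\BB_\delta(\bq_i)$.
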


The order of quantifiers is $\forall A\,\exists f$, with the input and budget fixed. A function $f$ witnessing failure of the algorithm within the budget is called a \emph{hard instance}. It is allowed to depend on the algorithm as well as the fixed inputs and budget. Sections~\ref{sec:construction}--\ref{sec:replay} give the construction and proof.

\Needspace{14\baselineskip}
\begin{theorem}[Deterministic zeroth-order query bound]
\label{thm:upper}
Let $d\ge1$, $L,\Delta,\delta>0$, $\bx_0\in\RR^d$, and $0<\epsilon<L$. A deterministic algorithm using only function values returns a $(\delta,\epsilon)$-Goldstein stationary point of every $f\in\mathcal F_d(L,\Delta;\bx_0)$ within
\begin{equation}
1+\left(\frac{2\Delta}{\delta\epsilon}+1\right)
\left(1+\frac{4L}{\epsilon}\right)^d
\label{eq:upper-bound}
\end{equation}
total calls. If $\epsilon\ge L$, the initial point is already valid and the prescribed initial call suffices.
\end{theorem}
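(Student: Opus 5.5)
We prove Theorem~\ref{thm:upper} with a deterministic descent scheme. Each step uses a finite direction net to either certify stationarity or produce a decrease of order $\delta\epsilon$ in $f$.

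\textbf{Direction net.} Fix a net $\mathcal N\subset\sphere^{d-1}$ with covering radius $\epsilon/(2L)$ on the sphere. Its cardinality is at most $(1+4L/\epsilon)^d$ by the standard volumetric argument; Appendix~B supplies the deterministic construction. At the current iterate $\bx_k$ we query $f(\bx_k+\delta\bu)$ for every $\bu\in\mathcal N$. If some $\bu$ gives
\[
f(\bx_k+\delta\bu)\le f(\bx_k)-\tfrac{\delta\epsilon}{2},
\]
we set $\bx_{k+1}=\bx_k+\delta\bu$. Otherwise we return $\bx_k$. We reuse the value $f(\bx_k)$, which is already known: from the initial call when $k=0$, and from the net scan of the previous round afterwards.

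\textbf{Number of rounds.} The iterates satisfy $f(\bx_k)\le f(\bx_0)-k\delta\epsilon/2$ and $f\ge f(\bx_0)-\Delta$. Hence at most $2\Delta/(\delta\epsilon)$ successful rounds occur. Counting the final unsuccessful round, there are at most $2\Delta/(\delta\epsilon)+1$ rounds. Each round costs $|\mathcal N|$ calls, and the initial call adds one more. This gives exactly \eqref{eq:upper-bound}.

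\textbf{Certification step (main obstacle).} Suppose the algorithm returns $\bx=\bx_k$, and let $\bg$ be the minimum-norm element of $\partial_\delta f(\bx)$. This element exists by Lemma~\ref{lem:compactness}. Assume for contradiction that $\norm{\bg}>\epsilon$. By the projection characterization, $\ip{\bg}{\bw}\ge\norm{\bg}^2$ for every $\bw\in\partial_\delta f(\bx)$. Set $\bv=-\bg/\norm{\bg}$. For $t\in[0,\delta]$, Lebourg's mean value theorem gives some $\bw\in\partial f(\bx+s\bv)\subset\partial_\delta f(\bx)$ with
\[
f(\bx+\delta\bv)-f(\bx)=\delta\ip{\bw}{\bv}\le-\delta\norm{\bg}<-\delta\epsilon .
\]
Now choose $\bu\in\mathcal N$ with $\norm{\bu-\bv}\le\epsilon/(2L)$. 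The Lipschitz bound gives
\[
f(\bx+\delta\bu)\le f(\bx+\delta\bv)+L\delta\,\frac{\epsilon}{2L}<f(\bx)-\tfrac{\delta\epsilon}{2}.
\]
This contradicts the stopping test, so $\norm{\bg}\le\epsilon$ and the returned point is $(\delta,\epsilon)$-Goldstein stationary.

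The delicate point is the exact combination of net radius and threshold. With the $\tfrac12$ split above the constants match \eqref{eq:upper-bound}. The strict inequality comes from $\norm{\bg}>\epsilon$, so the test's non-strict comparison is safe.

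\textbf{Case $\epsilon\ge L$.} Lemma~\ref{lem:compactness} gives $\partial_\delta f(\bx_0)\subset\BB_L(\mathbf0)$, so $\dist(\mathbf0,\partial_\delta f(\bx_0))\le L\le\epsilon$. Returning $\bx_0$ after the prescribed initial call is therefore valid.
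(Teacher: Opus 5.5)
Your proposal is correct and follows essentially the same route as the paper: an $\epsilon/(2L)$-net of size at most $(1+4L/\epsilon)^d$, a full scan with acceptance threshold $\delta\epsilon/2$, and a stopping test certified by the minimum-norm element of $\partial_\delta f(\bx)$ together with Lebourg's mean value theorem. The count $1+(m+1)M$ with $m\le 2\Delta/(\delta\epsilon)$ is also the paper's; the only blemish is notational, since in the Lebourg step the stray parameter $t\in[0,\delta]$ should simply be the intermediate point $\bx+s\bv$ with $s\in(0,\delta)$.
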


Since $h_{\bq}(\bq)=f(\bq)$, this also bounds \eqref{eq:complexity}. Both bounds hold when each oracle call returns the function value together with either the entire Clarke subdifferential or one Clarke subgradient: the upper bound ignores subgradients, and the lower-bound construction has singleton subdifferentials at every query used in the construction.

\Needspace{13\baselineskip}
\begin{corollary}[Fixed-accuracy exponential dimension dependence]
\label{cor:theta-d}
For every $d\ge4$ and $\bx_0\in\RR^d$,
\begin{equation*}
\left\lfloor\left(\frac72\right)^{(d-2)/2}\right\rfloor
\le\mathcal T_{\rm det}\left(d,1,1,\tfrac12,\tfrac18;\bx_0\right)
\le1+33^{d+1}.
\end{equation*}
Consequently, the fixed-accuracy complexity is $\exp(\Theta(d))$. There is no uniform deterministic bound that is a joint polynomial in $(d,\delta^{-1},\epsilon^{-1})$ for the full function class, even with $L=\Delta=1$.
\end{corollary}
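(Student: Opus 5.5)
The corollary follows by specializing Theorems~\ref{thm:main} and \ref{thm:upper} to $L=\Delta=1$, $\delta=\tfrac12$, $\epsilon=\tfrac18$. The hypotheses of Theorem~\ref{thm:main} hold, since $\delta=\tfrac12<\Delta/L=1$ and $\epsilon=\tfrac18<\tfrac13$, and the assumption $d\ge4$ covers $d\ge3$. The base in \eqref{eq:budget} is
\[
\frac{L-\epsilon}{2\epsilon}=\frac{7/8}{1/4}=\frac72 .
\]

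\textbf{Lower bound.} Set $N=\lfloor(7/2)^{(d-2)/2}\rfloor$. Since $d\ge4$, we have $(7/2)^{(d-2)/2}\ge 7/2$, so $N\ge3$. Take $T=N-1\ge2$. Then $T+1=N\le(7/2)^{(d-2)/2}$, so \eqref{eq:budget} holds. Theorem~\ref{thm:main} then gives, for every deterministic $A$, some $f\in\mathcal F_d(1,1;\bx_0)$ with $\tau_A(f;\bx_0,\tfrac12,\tfrac18)>T$. Because $\tau_A$ takes values in the integers together with $+\infty$, this means $\sup_f\tau_A\ge T+1=N$. Taking the infimum over $A$ in \eqref{eq:complexity} yields $\mathcal T_{\rm det}\ge N$.

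\textbf{Upper bound.} The algorithm of Theorem~\ref{thm:upper} uses only function values, and it is admissible in the local-oracle model because $h_{\bq}(\bq)=f(\bq)$. Hence $\mathcal T_{\rm det}$ is at most \eqref{eq:upper-bound}. Substituting the parameters gives $2\Delta/(\delta\epsilon)+1=2/(1/16)+1=33$ and $1+4L/\epsilon=33$. The bound is therefore $1+33\cdot33^d=1+33^{d+1}$.

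\textbf{Consequences.} For the $\exp(\Theta(d))$ claim, note that $\log N\ge c\,d$ for a constant $c>0$ when $d\ge4$. This uses $N\ge\tfrac12(7/2)^{(d-2)/2}$, valid because $N\ge1$ and $\lfloor x\rfloor\ge x/2$ for $x\ge1$, together with $d-2\ge d/2$. Also, $\log(1+33^{d+1})\le C\,d$.

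For the non-polynomial claim, suppose a deterministic algorithm had a uniform bound $p(d,\delta^{-1},\epsilon^{-1})$ with $p$ a polynomial, valid for the full class with $L=\Delta=1$. At $\delta^{-1}=2$ and $\epsilon^{-1}=8$ this becomes a polynomial in $d$. It would then eventually fall below $N-1$, contradicting the lower bound, which shows that every algorithm exceeds $N-1$ calls on some instance.

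No step is a real obstacle. The only care needed is the floor/integer bookkeeping that turns the strict inequality $\tau_A>T$ into $\mathcal T_{\rm det}\ge N$, and checking that $d\ge4$ makes $T\ge1$.
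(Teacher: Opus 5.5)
Your proposal is correct and follows the paper's proof essentially step for step. You use the same specialization of Theorem~\ref{thm:main} with budget $\lfloor(7/2)^{(d-2)/2}\rfloor-1$ and the same integer rounding of $\tau_A>T$, the same substitution into \eqref{eq:upper-bound}, and the same $\lfloor x\rfloor\ge x/2$ and polynomial-versus-exponential arguments for the consequences. The only step you leave implicit is absorbing the $-\log 2$ into the linear lower bound, which works for $d\ge4$ with, e.g., $c=\tfrac14\log\tfrac74$.
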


\begin{proof}
Set
\[
E_d=\left(\frac72\right)^{(d-2)/2},\qquad T_d=\lfloor E_d\rfloor-1.
\]
For $d\ge4$, $T_d\ge1$ and $T_d+1\le E_d$. Theorem~\ref{thm:main}, applied with $L=\Delta=1$, $\delta=1/2$, and $\epsilon=1/8$, gives for every $A$ an admissible $f$ with $\tau_A(f)>T_d$. Since finite query counts are integers, taking the supremum over $f$ and then the infimum over $A$ gives the lower bound $\lfloor E_d\rfloor$.

At the same parameters, \eqref{eq:upper-bound} gives
\[
1+(32+1)33^d=1+33^{d+1}.
\]
The inequality $\lfloor E_d\rfloor\ge E_d/2$ then gives
\[
\frac{d-2}{2}\log\frac72-\log2
\le\log\mathcal T_{\rm det}\left(d,1,1,\tfrac12,\tfrac18;\bx_0\right)
\le(d+1)\log33+\log2.
\]
This proves the exponential order. A joint polynomial guarantee in $(d,\delta^{-1},\epsilon^{-1})$ would reduce to a polynomial in $d$ at $(\delta,\epsilon)=(1/2,1/8)$ and contradict the lower bound for sufficiently large $d$.
\end{proof}

For general parameters with $0<\epsilon<L/3$ and $0<\delta<\Delta/L$, the same integer-budget argument gives
\begin{equation*}
\mathcal T_{\rm det}(d,L,\Delta,\delta,\epsilon;\bx_0)
\ge\left\lfloor\left(\frac{L-\epsilon}{2\epsilon}\right)^{(d-2)/2}\right\rfloor
\qquad(d\ge3).
\end{equation*}
When the floor is at least two, apply Theorem~\ref{thm:main} with a budget one smaller; when it is one, the initial call gives the bound. Together with Theorem~\ref{thm:upper}, this yields $\exp(\Theta(d))$ for fixed admissible parameters.

\section{A finite-set construction}
\label{sec:construction}

The construction below assigns the prescribed affine local data to a finite point set satisfying a cardinality condition. Its scalar parameters remain free until Section~\ref{sec:replay}.

\subsection{Overview of the construction}
\label{sec:construction-outline}

The main difficulty is to make a finite query history consistent with one objective while excluding Goldstein stationarity at every relevant point. Affine agreement near the queries fixes the oracle responses, but it is not enough: the neighborhood $\BB_\delta(\bq)$ used in the stationarity condition can extend well beyond the region of affine agreement at $\bq$. The construction therefore uses an interpolant whose subgradients share a positive projection in one direction, even where its local formulas change.

Let $S=\{\bq_1,\ldots,\bq_N\}\subset\RR^d$ be a finite set of $N$ distinct points at which both local affine agreement and Goldstein nonstationarity are required. Choose a unit vector $\bu$ with first coordinate $\alpha>0$ and with
\[
|\langle\bu,\bq_i-\bq_j\rangle|\le\gamma\norm{\bq_i-\bq_j}
\qquad(i\ne j),
\]
where $0<\alpha<\gamma<1$. A Gaussian estimate and a union bound allow $N$ to be exponential in $d$. This bound on inner products with the pairwise differences replaces the exact orthogonality to query-dependent subspaces used in \cite{DNN,nogas}.

Let $\mathbf P=\mathbf I-\bu\bu^\top$ be the orthogonal projection onto $\bu^\perp$. For $N\ge2$, the interpolant is
\[
F(\bx)=\min_{1\le i\le N}\left\{
\mathbf e_1^\top(\bx-\bq_i)+\beta\bigl(\norm{\mathbf P(\bx-\bq_i)}-r\bigr)_+
\right\},
\]
where $\beta>1/\sqrt{1-\gamma^2}$ is the penalty coefficient and $r>0$ is chosen in \eqref{eq:parameters}. Each indexed function is a branch. The norm in its penalty is the distance to the line through $\bq_i$ parallel to $\bu$; only the excess beyond $r$ is penalized. The bound on pairwise differences ensures that the $i$th branch is strictly smaller than the others near $\bq_i$, where its own penalty vanishes. Hence $F(\bx)=\mathbf e_1^\top(\bx-\bq_i)$ there. For a singleton, the affine function itself suffices.

Every penalty is unchanged by translation along $\bu$, which gives
\[
F(\bx+t\bu)=F(\bx)+\alpha t.
\]
The Clarke defining inequalities in the directions $\bu$ and $-\bu$ then give $\langle\bg,\bu\rangle=\alpha$ for every $\bg\in\partial F(\bx)$. This common projection survives branch switches and convex combinations of subgradients. The bound $L_F$ from Lemma~\ref{lem:interpolation} need not be the least Lipschitz constant of $F$. Scaling by $L/L_F$ and truncating below $-\Delta$ produce the required objective. The condition $\delta<\Delta/L$ keeps the truncation inactive on an open set containing each ball $\BB_\delta(\bq_i)$. The remaining task is to choose scalar parameters satisfying $L\alpha/L_F>\epsilon$ while the bound on pairwise differences still permits the desired number of points.

These scalar parameters are fixed before any queries are generated. Each query at $\bq$ receives the affine function $(L/L_F)\mathbf e_1^\top(\bx-\bq)$, and only the answered part of the history is retained. If the algorithm returns within the budget, its output is added to $S$, whether or not it was queried. The finite-set construction then supplies one objective realizing every retained response. When run on this fixed objective, the algorithm reproduces the retained history and the action following it by determinism and locality. Section~\ref{sec:replay} gives this resisting-oracle argument, including the case of an unqueried output.

\subsection{Choice of direction}

The direction $\bu$ must satisfy two competing requirements: its first coordinate must remain positive, while projection onto $\bu^\perp$ must retain enough of each pairwise difference to separate the affine branches. For $0<\alpha<\gamma<1$, write
\[
s_\alpha:=\sqrt{1-\alpha^2},\qquad c_\gamma:=\sqrt{1-\gamma^2}.
\]
The estimate below ensures that orthogonal projection preserves at least the fraction $c_\gamma$ of the distance between any two points.

\Needspace{15\baselineskip}
\begin{lemma}[A direction with bounded inner products]
\label{lem:direction}
Let $d\ge3$, $0<\alpha<\gamma<1$, and $S=\{\bq_1,\ldots,\bq_N\}\subset\RR^d$ be a nonempty finite set of distinct points. If
\begin{equation}
N(N-1)\left(\frac{1-\gamma^2}{1-\alpha^2}\right)^{(d-2)/2}<1,
\label{eq:cardinality}
\end{equation}
there is a unit vector $\bu\in\RR^d$ such that
\begin{equation}
\ip{\bu}{\mathbf e_1}=\alpha,\qquad
|\ip{\bu}{\bq_i-\bq_j}|\le\gamma\norm{\bq_i-\bq_j}\quad(i\ne j).
\label{eq:avoidance}
\end{equation}
For $N=1$, condition \eqref{eq:cardinality} holds automatically and the second requirement is empty.
\end{lemma}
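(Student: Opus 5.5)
We parametrize the admissible directions as $\bu=\alpha\mathbf e_1+s_\alpha\bw$, where $\bw$ ranges over the unit sphere of $\mathbf e_1^\perp\cong\RR^{d-1}$. Every such $\bu$ is a unit vector with $\ip{\bu}{\mathbf e_1}=\alpha$. We then choose $\bw$ uniformly at random on $\sphere^{d-2}\subset\mathbf e_1^\perp$ and show that each pair $i\ne j$ fails \eqref{eq:avoidance} with probability less than $1/(N(N-1))$. A union bound over the $N(N-1)$ ordered pairs then leaves positive probability of success. An unordered count would even allow a factor $2$ slack. The case $N=1$ needs no work, since \eqref{eq:cardinality} reads $0<1$.

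\textbf{Step 1: reduction to a spherical cap event.} Fix $i\ne j$ and let $\bz=(\bq_i-\bq_j)/\norm{\bq_i-\bq_j}$. Decompose $\bz=z_1\mathbf e_1+\bz'$ with $\bz'\perp\mathbf e_1$ and $|z_1|^2+\norm{\bz'}^2=1$. Then
\[
\ip{\bu}{\bz}=\alpha z_1+s_\alpha\ip{\bw}{\bz'}.
\]
By Cauchy--Schwarz in $\RR^2$, applied to the vectors $(\alpha,s_\alpha)$ and $(z_1,\norm{\bz'})$, we have $|\ip{\bu}{\bz}|\le 1$. The failure event $|\ip{\bu}{\bz}|>\gamma$ therefore requires $|\ip{\bw}{\bz'}|$ to be large. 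We want a bound that is uniform in $z_1$. If $\bz'=\mathbf 0$, then $|\ip{\bu}{\bz}|=\alpha<\gamma$ and no failure occurs. Otherwise write $\bv=\bz'/\norm{\bz'}$ and $t=\ip{\bw}{\bv}$. Failure means $|\alpha z_1+s_\alpha\norm{\bz'}t|>\gamma$. Set $\phi$ so that $(z_1,\norm{\bz'})=(\cos\phi,\sin\phi)$, and write $\alpha=\cos\theta$, $s_\alpha=\sin\theta$. Then $\ip{\bu}{\bz}=\cos\theta\cos\phi+\sin\theta\sin\phi\, t$. Viewed as a function of $\phi$, its maximum in absolute value equals $\sqrt{\alpha^2+s_\alpha^2t^2}$. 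Failure thus forces $\alpha^2+(1-\alpha^2)t^2>\gamma^2$, that is,
\[
t^2>\frac{\gamma^2-\alpha^2}{1-\alpha^2},\qquad 1-t^2<\frac{1-\gamma^2}{1-\alpha^2}=:\rho .
\]
This is the step where the ratio in \eqref{eq:cardinality} appears.

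\textbf{Step 2: cap measure estimate.} For $\bw$ uniform on $\sphere^{m-1}$ with $m=d-1\ge2$, the coordinate $t=\ip{\bw}{\bv}$ has density proportional to $(1-t^2)^{(m-3)/2}$ on $[-1,1]$. We need
\[
\PP\bigl(1-t^2<\rho\bigr)\le\rho^{(m-1)/2}=\rho^{(d-2)/2}.
\]
I would prove this through the Gaussian representation $\bw=\bg/\norm{\bg}$. In this representation, $1-t^2=\norm{\bg_\perp}^2/\norm{\bg}^2$, where $\bg_\perp$ is the component of $\bg$ orthogonal to $\bv$ and has $m-1$ degrees of freedom. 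The event becomes $g_1^2>\frac{1-\rho}{\rho}\norm{\bg_\perp}^2$. Conditioning on $\bg_\perp$ and using a Gaussian tail bound or a direct Beta-integral comparison should give the bound $\rho^{(m-1)/2}$. This is the main obstacle, because the constant must come out exactly $1$, possibly with the factor $2$ from the two caps absorbed by the strict inequality or by counting ordered pairs. The cleanest route I would try first is the exact formula for the two caps,
\[
\PP\bigl(|t|>\sqrt{1-\rho}\bigr)=\PP\bigl(\text{Beta}(\tfrac{m-1}{2},\tfrac12)<\rho\bigr),
\]
since $1-t^2\sim\text{Beta}(\frac{m-1}{2},\frac12)$. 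I would then bound the incomplete Beta integral
\[
\frac{\int_0^\rho s^{(m-3)/2}(1-s)^{-1/2}\,ds}{B\left(\frac{m-1}{2},\frac12\right)}
\]
by $\rho^{(m-1)/2}$. This bound holds because the ratio $I_\rho/\rho^{(m-1)/2}$ is nondecreasing in $\rho$: the factor $(1-s)^{-1/2}$ is increasing in $s$, so the average of the integrand against the weight $s^{(m-3)/2}$ grows with $\rho$. The ratio equals $1$ at $\rho=1$.

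\textbf{Step 3: conclusion.} Summing the bound of Step 2 over the ordered pairs gives total failure probability at most $N(N-1)\rho^{(d-2)/2}$, which is less than $1$ by \eqref{eq:cardinality}. Hence some $\bw$ avoids every failure event, and the corresponding $\bu$ satisfies \eqref{eq:avoidance}.
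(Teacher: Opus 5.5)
Your proof is correct. Its skeleton matches the paper's: the same parametrization $\bu=\alpha\mathbf e_1+s_\alpha\bw$ with $\bw$ uniform on the unit sphere of $\mathbf e_1^\perp$, the same Cauchy--Schwarz reduction of each failure event to $1-t^2<\rho:=(1-\gamma^2)/(1-\alpha^2)$, and a union bound.

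The difference lies entirely in the cap estimate.

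\emph{The paper's route.} It writes the random direction as $\mathbf G/\norm{\mathbf G}$ and conditions on the sum $Z$ of the remaining $d-2$ squared Gaussian coordinates. It then combines the Chernoff tail $\PP(|G_1|>s)\le 2e^{-s^2/2}$ with the Gaussian integral $\EE e^{-\lambda G_j^2/2}=(1+\lambda)^{-1/2}$. This gives $2\rho^{(d-2)/2}$ per pair. The factor $2$ is paid for by summing over only the $N(N-1)/2$ unordered pairs.

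\emph{Your route.} You use the exact law $1-t^2\sim\mathrm{Beta}(\frac{d-2}{2},\frac12)$. You then note that a weighted average of the increasing function $(1-s)^{-1/2}$ grows with $\rho$. This yields the sharper per-pair bound $\rho^{(d-2)/2}$. Hence the ordered count $N(N-1)$ already suffices, and the unordered count leaves a factor $2$ to spare.

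\emph{Trade-off.} The paper's argument is self-contained from elementary Gaussian integrals. Yours needs the density $(1-t^2)^{(d-4)/2}$ of a spherical coordinate, but buys the better constant.

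\emph{A remark on your hedging in Step 2.} If you had used a Gaussian tail bound of Chernoff type, the factor $2$ would appear. It cannot be absorbed by the strict inequality in the cardinality condition. Nor can it be absorbed by counting ordered pairs. It can be absorbed only by counting unordered pairs, as the paper does. Your Beta argument makes this moot.
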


\begin{proof}
For $N=1$, the choice $\bu=\alpha\mathbf e_1+s_\alpha\mathbf e_2$ suffices. For the rest of the proof, assume $N\ge2$. The symbols $\PP$ and $\EE$ denote probability and expectation. Choose $\bv$ uniformly on the unit sphere in $\mathbf e_1^\perp$ and set
\begin{equation}
\bu=\alpha\mathbf e_1+s_\alpha\bv.
\label{eq:u}
\end{equation}
Then $\norm{\bu}=1$ and $\ip{\bu}{\mathbf e_1}=\alpha$. For a fixed unit vector $\bw$, define the set of choices that violate the inner-product bound by
\[
\mathcal E(\bw):=\left\{\bv\in\mathbf e_1^\perp:
\norm{\bv}=1,\ \bigl|\ip{\alpha\mathbf e_1+s_\alpha\bv}{\bw}\bigr|>\gamma\right\}.
\]

Write $\bw=a\mathbf e_1+b\boldsymbol\omega$, where $a^2+b^2=1$, $b\ge0$, and, when $b>0$, $\boldsymbol\omega$ is a unit vector in $\mathbf e_1^\perp$. If $b=0$, then $|\ip{\bu}{\bw}|=\alpha<\gamma$ and $\mathcal E(\bw)$ is empty. For $b>0$, Cauchy--Schwarz gives
\[
|\ip{\bu}{\bw}|
=|\alpha a+s_\alpha b\ip{\bv}{\boldsymbol\omega}|
\le\sqrt{\alpha^2+s_\alpha^2|\ip{\bv}{\boldsymbol\omega}|^2}.
\]
Setting $\xi=\sqrt{(\gamma^2-\alpha^2)/(1-\alpha^2)}\in(0,1)$ gives
\begin{equation}
\mathcal E(\bw)\subseteq\left\{\bv\in\mathbf e_1^\perp:
\norm{\bv}=1,\ |\ip{\bv}{\boldsymbol\omega}|>\xi\right\}.
\label{eq:bad-event}
\end{equation}

Identify $\mathbf e_1^\perp$ with $\RR^n$, where $n=d-1$, using $\boldsymbol\omega$ as the first coordinate vector. Let $\mathbf G=(G_1,\ldots,G_n)$ have independent standard normal coordinates. Rotational invariance allows the representation $\bv=\mathbf G/\norm{\mathbf G}$, with $\mathbf G\ne\mathbf0$ almost surely. Put
\[
Z=\sum_{j=2}^nG_j^2,\qquad \lambda_\xi=\frac{\xi^2}{1-\xi^2}.
\]
Then $G_1$ and $Z$ are independent, and
\[
\ip{\bv}{\boldsymbol\omega}=\frac{G_1}{\sqrt{G_1^2+Z}},\qquad
\PP\bigl(|\ip{\bv}{\boldsymbol\omega}|>\xi\bigr)
=\PP\left(\frac{G_1^2}{G_1^2+Z}>\xi^2\right)
=\PP(G_1^2>\lambda_\xi Z).
\]
For $s>0$, Markov's inequality applied to $e^{sG_1}$ gives
\[
\PP(G_1>s)=\PP(e^{sG_1}>e^{s^2})
\le e^{-s^2}\EE e^{sG_1}=e^{-s^2/2}.
\]
By symmetry, and also at $s=0$,
\begin{equation}
\PP(|G_1|>s)\le2e^{-s^2/2}\qquad(s\ge0).
\label{eq:gaussian-tail}
\end{equation}
Conditioning on $Z$ and using independence, followed by \eqref{eq:gaussian-tail}, yields
\begin{align*}
\PP(G_1^2>\lambda_\xi Z)
&=\EE\left[\PP\left(|G_1|>\sqrt{\lambda_\xi Z}\mid Z\right)\right]\\
&\le2\EE e^{-\lambda_\xi Z/2}
=2\prod_{j=2}^n\EE e^{-\lambda_\xi G_j^2/2}\\
&=2(1+\lambda_\xi)^{-(n-1)/2}
=2(1-\xi^2)^{(d-2)/2}.
\end{align*}
The first equality is the tower property; conditioning on $Z$ leaves the law of $G_1$ unchanged. Each factor is given by the Gaussian integral
\[
\EE e^{-\lambda_\xi G_j^2/2}
=\frac1{\sqrt{2\pi}}\int_{-\infty}^{\infty}e^{-(1+\lambda_\xi)t^2/2}\,\df t
=(1+\lambda_\xi)^{-1/2}.
\]
Since $1-\xi^2=(1-\gamma^2)/(1-\alpha^2)$, \eqref{eq:bad-event} gives, including the case $b=0$,
\[
\PP\bigl(\bv\in\mathcal E(\bw)\bigr)
\le2\left(\frac{1-\gamma^2}{1-\alpha^2}\right)^{(d-2)/2}.
\]

For $i<j$, let $\bw_{ij}=(\bq_i-\bq_j)/\norm{\bq_i-\bq_j}$. The absolute-value condition is unchanged when the two points are interchanged. It is therefore enough to consider the $N(N-1)/2$ unordered pairs. The union bound gives
\[
\PP\left(\bv\in\bigcup_{i<j}\mathcal E(\bw_{ij})\right)
\le\sum_{i<j}\PP(\bv\in\mathcal E(\bw_{ij}))
\le N(N-1)\left(\frac{1-\gamma^2}{1-\alpha^2}\right)^{(d-2)/2}<1.
\]
A choice outside this union in \eqref{eq:u} gives \eqref{eq:avoidance}.
\end{proof}

No independence between the bad events is needed. Probability is used only to choose a fixed direction for a fixed set, not to randomize the optimization algorithm.

\subsection{Construction of an auxiliary Lipschitz function}

To preserve a common slope along $\bu$, each affine model is penalized only in the transverse directions. The penalty must make a branch uniquely active near its center without increasing the Lipschitz bound with the number of centers.

\Needspace{19\baselineskip}
\begin{lemma}[A Lipschitz function with prescribed local behavior]
\label{lem:interpolation}
Let $S=\{\bq_1,\ldots,\bq_N\}\subset\RR^d$ be a nonempty finite set of distinct points, and let $\bu$ satisfy \eqref{eq:avoidance} for $0<\alpha<\gamma<1$. Choose $\beta>1/c_\gamma$, and define the unscaled Lipschitz bound
\begin{equation}
L_F:=\sqrt{\alpha^2+(s_\alpha+\beta)^2},\qquad
s_\alpha=\sqrt{1-\alpha^2},\quad c_\gamma=\sqrt{1-\gamma^2}.
\label{eq:LF}
\end{equation}
There is a globally $L_F$-Lipschitz function $F:\RR^d\to\RR$ with the following properties:
\begin{enumerate}[(i)]
\item $F(\bx)=\mathbf e_1^\top(\bx-\bq_i)$ on an open neighborhood of every $\bq_i$. Consequently, $F(\bq_i)=0$ and $\partial F(\bq_i)=\{\mathbf e_1\}$.
\item For every $\bx\in\RR^d$ and $t\in\RR$,
\begin{equation}
F(\bx+t\bu)=F(\bx)+\alpha t.
\label{eq:translation}
\end{equation}
\item For every $\bx\in\RR^d$ and $\bg\in\partial F(\bx)$,
\begin{equation}
\ip{\bg}{\bu}=\alpha.
\label{eq:clarke-projection}
\end{equation}
\end{enumerate}
For $N\ge2$, the neighborhood in (i) can be the open ball $\BB_r^\circ(\bq_i)$ for the common radius in \eqref{eq:parameters}.
\end{lemma}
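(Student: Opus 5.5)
The plan is to write $F$ explicitly as the minimum of the branches from the overview, pick the radius $r$ so that each branch is strictly active near its own centre, and then check (i)--(iii) and the Lipschitz bound directly.

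\textbf{Definition and choice of $r$.} For $N=1$ we take $F(\bx)=\mathbf e_1^\top(\bx-\bq_1)$. For $N\ge2$ we set
\[
F(\bx)=\min_{1\le i\le N}\phi_i(\bx),\qquad
\phi_i(\bx)=\mathbf e_1^\top(\bx-\bq_i)+\beta\bigl(\norm{\mathbf P(\bx-\bq_i)}-r\bigr)_+,
\]
where $\mathbf P=\mathbf I-\bu\bu^\top$. Let $D_{\min}=\min_{i\ne j}\norm{\bq_i-\bq_j}>0$. We choose $r$ with
\[
0<r\le\frac{(\beta c_\gamma-1)D_{\min}}{2\beta},
\]
which is positive because $\beta>1/c_\gamma$. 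We expect \eqref{eq:parameters} to impose a condition of this form.

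\textbf{Property (i).} Fix $i$ and $\bx$ with $\rho:=\norm{\bx-\bq_i}<r$. Since $\norm{\mathbf P(\bx-\bq_i)}\le\rho<r$, we get $\phi_i(\bx)=\mathbf e_1^\top(\bx-\bq_i)$. Now take $j\ne i$ and write $D=\norm{\bq_i-\bq_j}$. By \eqref{eq:avoidance}, $\norm{\mathbf P(\bq_i-\bq_j)}^2=D^2-\ip{\bu}{\bq_i-\bq_j}^2\ge c_\gamma^2D^2$. The triangle inequality then gives $\norm{\mathbf P(\bx-\bq_j)}\ge c_\gamma D-\rho$. Using $(a)_+\ge a$ and $\mathbf e_1^\top(\bq_i-\bq_j)\ge -D$, we obtain
\[
\phi_j(\bx)-\phi_i(\bx)\ge -D+\beta(c_\gamma D-\rho-r)>(\beta c_\gamma-1)D-2\beta r\ge0 .
\]
So $F=\phi_i$ is affine on $\BB_r^\circ(\bq_i)$. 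The locality rule \eqref{eq:locality} together with \eqref{eq:affine-subdifferential} gives $F(\bq_i)=0$ and $\partial F(\bq_i)=\{\mathbf e_1\}$.

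\textbf{Lipschitz bound.} Write $\mathbf e_1=\alpha\bu+s_\alpha\bw$ with $\bw$ a unit vector in $\bu^\perp$; this uses $\ip{\bu}{\mathbf e_1}=\alpha$. Decompose a displacement as $\mathbf h=a\bu+\mathbf h_\perp$ with $\mathbf h_\perp\perp\bu$. Since $\mathbf P\mathbf h=\mathbf h_\perp$ and $t\mapsto\beta(t-r)_+$ is $\beta$-Lipschitz,
\[
|\phi_i(\bx+\mathbf h)-\phi_i(\bx)|\le\alpha|a|+(s_\alpha+\beta)\norm{\mathbf h_\perp}\le L_F\norm{\mathbf h}
\]
by Cauchy--Schwarz. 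A pointwise minimum of finitely many $L_F$-Lipschitz functions is $L_F$-Lipschitz, so $F$ is globally $L_F$-Lipschitz. In the case $N=1$, $F$ is $1$-Lipschitz and $1\le L_F$.

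\textbf{Properties (ii) and (iii).} Because $\mathbf P\bu=\mathbf 0$ and $\mathbf e_1^\top\bu=\alpha$, every branch satisfies $\phi_i(\bx+t\bu)=\phi_i(\bx)+\alpha t$, and taking the minimum preserves this, which is \eqref{eq:translation}. For (iii), the difference quotients in \eqref{eq:clarke-definition} along $\pm\bu$ are constant, so $F^\circ(\bx;\bu)=\alpha$ and $F^\circ(\bx;-\bu)=-\alpha$. Any $\bg\in\partial F(\bx)$ therefore satisfies $\ip{\bg}{\bu}\le\alpha$ and $-\ip{\bg}{\bu}\le-\alpha$, which gives \eqref{eq:clarke-projection}.

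\textbf{Main obstacle.} The only delicate step is the strict branch separation in (i). It requires combining the projected-distance bound from \eqref{eq:avoidance}, which is where $\beta c_\gamma>1$ enters, with a radius $r$ proportional to the minimum pairwise separation. The remaining steps are routine.
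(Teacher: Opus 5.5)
Your proposal is correct and follows the paper's proof essentially step for step. It uses the same minimum of transversely penalized affine branches, the same projected-separation estimate that makes the $i$th branch strictly active on $\BB_r^\circ(\bq_i)$, the same Cauchy--Schwarz Lipschitz bound, and the same Clarke-derivative argument along $\pm\bu$. The only difference is that you allow any $r\in\bigl(0,(\beta c_\gamma-1)s_*/(2\beta)\bigr]$, whereas \eqref{eq:parameters} fixes $r$ at the right endpoint, a value your argument already covers.
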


\begin{proof}
For $N=1$, take $F(\bx)=\mathbf e_1^\top(\bx-\bq_1)$. All claims follow from $\ip{\bu}{\mathbf e_1}=\alpha$ and $L_F^2=1+2s_\alpha\beta+\beta^2\ge1$. For $N\ge2$, set
\begin{equation}
\mathbf P=\mathbf I-\bu\bu^\top,\qquad
s_*:=\min_{i\ne j}\norm{\bq_i-\bq_j}>0,\qquad
r:=\frac{\beta c_\gamma-1}{2\beta}s_*>0.
\label{eq:parameters}
\end{equation}
Since $\mathbf P$ is the orthogonal projection onto $\bu^\perp$,
\begin{equation}
\norm{\mathbf P\bw}^2=\norm{\bw}^2-|\ip{\bu}{\bw}|^2,\qquad
\norm{\mathbf P\bw}\le\norm{\bw}.
\label{eq:projection-identities}
\end{equation}
Define the branches by
\begin{equation}
F_i(\bx)=\mathbf e_1^\top(\bx-\bq_i)
+\beta\bigl(\norm{\mathbf P(\bx-\bq_i)}-r\bigr)_+,
\label{eq:branch}
\end{equation}
and let
\begin{equation}
\begin{aligned}
F(\bx)&=\min_{1\le i\le N}F_i(\bx)\\
&=\mathbf e_1^\top\bx+\min_{1\le i\le N}
\left\{-\mathbf e_1^\top\bq_i+\beta\max\{0,\norm{\mathbf P(\bx-\bq_i)}-r\}\right\}.
\end{aligned}
\label{eq:F}
\end{equation}

For the Lipschitz estimate, write $\mathbf e_1=\alpha\bu+\mathbf P\mathbf e_1$, with $\norm{\mathbf P\mathbf e_1}=s_\alpha$. The norm and positive-part maps are $1$-Lipschitz. For $\mathbf a=\bx-\by$, they give
\begin{align*}
|F_i(\bx)-F_i(\by)|
&\le\alpha|\ip{\bu}{\mathbf a}|+(s_\alpha+\beta)\norm{\mathbf P\mathbf a}\\
&\le\sqrt{\alpha^2+(s_\alpha+\beta)^2}
\sqrt{|\ip{\bu}{\mathbf a}|^2+\norm{\mathbf P\mathbf a}^2}
=L_F\norm{\bx-\by}.
\end{align*}
If $F(\by)=F_i(\by)$, then $F(\bx)-F(\by)\le F_i(\bx)-F_i(\by)\le L_F\norm{\bx-\by}$. Interchanging $\bx$ and $\by$ proves the same bound for $F$, with no factor depending on $N$.

\medskip
\noindent\emph{Local agreement.}\enspace Equations \eqref{eq:avoidance} and \eqref{eq:projection-identities} imply
\begin{equation}
\norm{\mathbf P(\bq_i-\bq_j)}\ge c_\gamma\norm{\bq_i-\bq_j}
\qquad(i\ne j).
\label{eq:projected-distance}
\end{equation}
Fix $i$ and write $\bx=\bq_i+\mathbf h$, where $\norm{\mathbf h}<r$. The $i$th penalty vanishes, giving $F_i(\bx)=\mathbf e_1^\top\mathbf h$. For $j\ne i$, put $\bw=\bq_i-\bq_j$. By \eqref{eq:projected-distance},
\begin{align*}
F_j(\bq_i+\mathbf h)-F_i(\bq_i+\mathbf h)
&=\mathbf e_1^\top\bw+\beta(\norm{\mathbf P(\bw+\mathbf h)}-r)_+\\
&\ge-\norm{\bw}+\beta\norm{\mathbf P\bw}-\beta\norm{\mathbf P\mathbf h}-\beta r\\
&\ge(\beta c_\gamma-1)s_*-\beta\norm{\mathbf h}-\beta r\\
&=\beta(r-\norm{\mathbf h})>0.
\end{align*}
Thus $F(\bx)=F_i(\bx)=\mathbf e_1^\top(\bx-\bq_i)$ on $\BB_r^\circ(\bq_i)$. At the center, $F(\bq_i)=0$, and locality gives $\partial F(\bq_i)=\{\mathbf e_1\}$.

\medskip
\noindent\emph{Common projection.}\enspace Since $\mathbf P\bu=\mathbf0$, translation along $\bu$ leaves each penalty unchanged. Hence,
\[
F_i(\bx+t\bu)=F_i(\bx)+\alpha t,\qquad
F(\bx+t\bu)=F(\bx)+\alpha t
\qquad(t\in\RR).
\]
The difference quotients in the directions $\bu$ and $-\bu$ are identically $\alpha$ and $-\alpha$, respectively. Therefore
\[
F^\circ(\bx;\bu)=\alpha,\qquad F^\circ(\bx;-\bu)=-\alpha.
\]
For $\bg\in\partial F(\bx)$, the defining inequalities give
\[
\ip{\bg}{\bu}\le\alpha,\qquad -\ip{\bg}{\bu}\le-\alpha,
\]
which prove \eqref{eq:clarke-projection}. This argument applies also where branches switch or a penalty is not differentiable; it requires no equality formula for the Clarke subdifferential of a minimum.
\end{proof}

\subsection{Scaling and lower truncation}

The interpolant is unbounded below along $-\bu$. Scaling to the prescribed Lipschitz bound and truncating at $-\Delta$ gives a bounded-below objective. The condition $\delta<\Delta/L$ leaves the truncation inactive on an open set containing each closed ball $\BB_\delta(\bq_i)$.

\Needspace{20\baselineskip}
\begin{proposition}[Consistency with affine oracle responses]
\label{prop:transcript}
Let $d\ge3$ and $S=\{\bq_1,\ldots,\bq_N\}\subset\RR^d$ be a nonempty finite set of distinct points. Fix $0<\alpha<\gamma<1$ satisfying \eqref{eq:cardinality}, choose $\beta>1/c_\gamma$, and define $L_F$ by \eqref{eq:LF}. Given $L,\Delta>0$ and $0<\delta<\Delta/L$, there exist a fixed function $f:\RR^d\to\RR$, a unit vector $\bu$, and a common radius $\rho>0$ such that:
\begin{enumerate}[(i)]
\item
$\Lip(f)\le L$, $\inf f=-\Delta$, and $\interior(\arg\min f)\ne\varnothing$;

\item
for every $i$ and $\bx\in\BB_\rho^\circ(\bq_i)$,
\begin{equation}
f(\bx)=\frac{L}{L_F}\mathbf e_1^\top(\bx-\bq_i),
\qquad
\bigl(f(\bq_i),\partial f(\bq_i)\bigr)
=
(0,\{L\mathbf e_1/L_F\});
\label{eq:local-copy}
\end{equation}

\item
for every $i$ and $\bg\in\partial_\delta f(\bq_i)$,
\begin{equation}
\ip{\bg}{\bu}=\frac{L\alpha}{L_F},
\label{eq:goldstein-projection}
\end{equation}
and consequently
\begin{equation}
\dist(\mathbf0,\partial_\delta f(\bq_i))\ge\frac{L\alpha}{L_F}.
\label{eq:goldstein-lower}
\end{equation}
\end{enumerate}
If $\bx_0\in S$, then $f\in\mathcal F_d(L,\Delta;\bx_0)$, $f(\bx_0)=0$, and its initial gap is exactly $\Delta$.
\end{proposition}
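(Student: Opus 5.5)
The plan is to apply Lemma~\ref{lem:direction} to $S$ to get $\bu$, then Lemma~\ref{lem:interpolation} to get $F$ with bound $L_F$, and to define
\[
f(\bx)=\max\Bigl\{\frac{L}{L_F}F(\bx),\,-\Delta\Bigr\}.
\]
Both pieces are $L$-Lipschitz, and the pointwise maximum of $L$-Lipschitz functions is $L$-Lipschitz, so $\Lip(f)\le L$.

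For (i), I use the translation identity \eqref{eq:translation}. It gives $(L/L_F)F(\bq_1+t\bu)=(L\alpha/L_F)t\to-\infty$ as $t\to-\infty$. Hence $f=-\Delta$ at some point, and $\inf f=-\Delta$ because $f\ge-\Delta$ everywhere. To see that the interior of $\arg\min f$ is nonempty, pick $t$ with $(L/L_F)F(\bq_1+t\bu)<-\Delta$. By continuity this strict inequality persists on an open ball, and there $f\equiv-\Delta$.

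For (ii), I choose the common radius $\rho$ with two requirements:
\begin{itemize}
\item it is at most the radius $r$ of Lemma~\ref{lem:interpolation}(i) (any positive radius when $N=1$);
\item it satisfies $\rho<\Delta/L$, so the truncation stays inactive there.
\end{itemize}
On $\BB_\rho^\circ(\bq_i)$ we have $(L/L_F)F(\bx)=(L/L_F)\mathbf e_1^\top(\bx-\bq_i)\ge -(L/L_F)\norm{\bx-\bq_i}>-\Delta$, using $L_F\ge1$. So $f$ equals the scaled affine function there. The subdifferential claim then follows from \eqref{eq:locality}, \eqref{eq:positive-scaling} and \eqref{eq:affine-subdifferential}.

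For (iii), which I expect to be the main step, I need the truncation to be inactive on an open set containing $\BB_\delta(\bq_i)$. By Lipschitz continuity and $F(\bq_i)=0$, for $\norm{\by-\bq_i}<\Delta/L$ we get $(L/L_F)F(\by)\ge -L\norm{\by-\bq_i}>-\Delta$. Since $\delta<\Delta/L$, the open ball $\BB^\circ_{\Delta/L}(\bq_i)$ contains $\BB_\delta(\bq_i)$, and on it $f=(L/L_F)F$. By \eqref{eq:locality} and \eqref{eq:positive-scaling}, every $\by\in\BB_\delta(\bq_i)$ then has $\partial f(\by)=(L/L_F)\partial F(\by)$. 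Lemma~\ref{lem:interpolation}(iii) gives $\ip{\bg}{\bu}=L\alpha/L_F$ for each such subgradient. This linear equality is preserved under convex combinations, so it holds on all of $\partial_\delta f(\bq_i)$, which is \eqref{eq:goldstein-projection}. Cauchy--Schwarz with $\norm{\bu}=1$ gives $\norm{\bg}\ge L\alpha/L_F$, and Lemma~\ref{lem:compactness} ensures the distance in \eqref{eq:goldstein-lower} is attained.

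Finally, if $\bx_0=\bq_i\in S$, then $f(\bx_0)=0$ and the gap is $0-(-\Delta)=\Delta$. Together with $\Lip(f)\le L$ and boundedness below, this shows $f\in\mathcal F_d(L,\Delta;\bx_0)$.
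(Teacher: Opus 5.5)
Your proposal is correct and follows the paper's proof essentially step for step. It uses the same $f=\max\{(L/L_F)F,-\Delta\}$, the same translation argument along $\bu$ for (i), a radius $\rho\le r$ with $\rho<\Delta/L$ for (ii), and locality plus the common projection of Lemma~\ref{lem:interpolation}(iii) for (iii). The only cosmetic difference is in (iii): you keep the truncation inactive on the single open ball $\BB^\circ_{\Delta/L}(\bq_i)\supset\BB_\delta(\bq_i)$, whereas the paper uses neighborhoods of radius $\tau=(\Delta-L\delta)/(2L)$ around each $\by\in\BB_\delta(\bq_i)$. Both give the open-neighborhood agreement that locality requires.
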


\begin{proof}
Choose $\bu$ by Lemma~\ref{lem:direction} and construct $F$ by Lemma~\ref{lem:interpolation}. Set
\begin{equation}
\phi(\bx)=\frac{L}{L_F}F(\bx),
\qquad
f(\bx)=\max\{\phi(\bx),-\Delta\}.
\label{eq:truncation}
\end{equation}
Both $\phi$ and $f$ are globally $L$-Lipschitz, since the scalar truncation map is $1$-Lipschitz. Also $f\ge-\Delta$.

For $N\ge2$, take $\rho=\min\{r,\Delta/(2L)\}$; for $N=1$, take $\rho=\Delta/(2L)$. On $\BB_\rho^\circ(\bq_i)$, the interpolant has its prescribed affine form and
\[
\phi(\bx)\ge-L\norm{\bx-\bq_i}>-\Delta/2.
\]
Thus $f=\phi$ there. Locality and \eqref{eq:affine-subdifferential} give \eqref{eq:local-copy}.

The translation identity \eqref{eq:translation} gives
\[
\phi(\bq_i-t\bu)=-\frac{L\alpha}{L_F}t.
\]
For $t>\Delta L_F/(L\alpha)$, this is strictly below $-\Delta$. By continuity, $\phi<-\Delta$ on an open neighborhood of such a point, and $f$ is constant there. Consequently $\inf f=-\Delta$ and $\operatorname{int}(\arg\min f)\ne\varnothing$. Every point in this open set has zero Clarke subgradient and is Goldstein stationary for every $\delta>0$.

To prove (iii), fix $\by\in\BB_\delta(\bq_i)$. Since $\phi(\bq_i)=0$,
\[
\phi(\by)\ge-L\delta>-\Delta.
\]
Local agreement is also required at boundary points of the ball. With
\[
\tau=\frac{\Delta-L\delta}{2L}>0,
\]
any $\bz\in\BB_\tau^\circ(\by)$ satisfies
\[
\phi(\bz)\ge\phi(\by)-L\norm{\bz-\by}
>-L\delta-L\tau=-\frac{\Delta+L\delta}{2}>-\Delta.
\]
Hence $f=\phi$ on an open neighborhood of every $\by\in\BB_\delta(\bq_i)$. By locality \eqref{eq:locality}, scaling \eqref{eq:positive-scaling}, and Lemma~\ref{lem:interpolation},
\[
\partial f(\by)=\frac{L}{L_F}\partial F(\by),\qquad
\ip{\bg}{\bu}=\frac{L\alpha}{L_F}
\quad\text{for every }\bg\in\partial f(\by).
\]
The projection remains unchanged under convex combinations, proving \eqref{eq:goldstein-projection}. Since $\norm{\bu}=1$, Cauchy--Schwarz gives $\norm{\bg}\ge L\alpha/L_F$ throughout $\partial_\delta f(\bq_i)$ and thus \eqref{eq:goldstein-lower}. Finally, if $\bx_0\in S$, then $f(\bx_0)=0$ and the initial gap is exactly $\Delta$, proving membership in \eqref{eq:function-class}.
\end{proof}

The affine radius $r$ is not required to be comparable to $\delta$, even when query points are close. The ball $\BB_\delta(\bq_i)$ need not be contained in the corresponding affine neighborhood: the translation identity controls its subgradients regardless of which branches are active.

\section{The deterministic query complexity lower bound}
\label{sec:replay}

Increasing the penalty coefficient strengthens branch separation but raises $L_F$, reducing the projection after scaling. The parameter choice below satisfies the separation condition, the required projection bound, and the probability bound for all pairwise differences simultaneously. The scalar parameters must be fixed before the queries, because $L/L_F$ appears in every oracle response.

\Needspace{16\baselineskip}
\subsection{Choice of scalar parameters}
\begin{lemma}[Parameters for a prescribed budget]
\label{lem:parameter-choice}
Let $d\ge3$, $L>0$, and $0<\epsilon<L/3$. Let the integer $T\ge1$ satisfy \eqref{eq:budget}. There are scalars $0<\alpha<\gamma<1$ and $\beta>1/c_\gamma$, depending only on $d,L,\epsilon,T$, for which $L_F$ in \eqref{eq:LF} satisfies
\begin{equation}
\frac{L\alpha}{L_F}>\epsilon,\qquad
T(T+1)\left(\frac{1-\gamma^2}{1-\alpha^2}\right)^{(d-2)/2}<1.
\label{eq:chosen-parameters}
\end{equation}
\end{lemma}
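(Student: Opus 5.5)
The plan is to reduce both conditions in \eqref{eq:chosen-parameters} to a single scalar ratio, then pick $\alpha$ at the optimum of the resulting inequality and $\gamma$ just on the safe side of the budget constraint. Write
\[
e:=\epsilon/L\in(0,1/3),\qquad K:=\frac{L-\epsilon}{2\epsilon}=\frac{1-e}{2e}>1,
\]
so that \eqref{eq:budget} reads $T+1\le K^{(d-2)/2}$.

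\emph{Step 1 (choice of $\gamma$ through the ratio $s_\alpha/c_\gamma$).} We parametrize $\gamma$ by $K':=s_\alpha/c_\gamma$. The second condition in \eqref{eq:chosen-parameters} becomes $T(T+1)K'^{-(d-2)}<1$. Because $T(T+1)<(T+1)^2\le K^{d-2}$, there is room to choose
\[
K'\in\Bigl(\max\{1,(T(T+1))^{1/(d-2)}\},\,K\Bigr).
\]
This interval is nonempty since $K>1$ and $T(T+1)<K^{d-2}$. Once $\alpha$ is fixed, set $c_\gamma:=s_\alpha/K'$. Then $K'>1$ gives $c_\gamma<s_\alpha$, hence $\gamma>\alpha$, and $c_\gamma>0$ gives $\gamma<1$.

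\emph{Step 2 (choice of $\alpha$).} Take $\beta$ first at the limiting value $1/c_\gamma=K'/s_\alpha$. From \eqref{eq:LF},
\[
L_F^2=1+2s_\alpha\beta+\beta^2=1+2K'+\frac{K'^2}{1-\alpha^2}.
\]
With $x:=\alpha^2$, the condition $L\alpha/L_F>\epsilon$ becomes
\[
x>e^2\Bigl(1+2K'+\frac{K'^2}{1-x}\Bigr).
\]
At $K'=K$ one has $e^2(1+2K)=e$ and $e^2K^2=(1-e)^2/4$. The inequality then reads $4(x-e)(1-x)>(1-e)^2$, and the left side is maximized at $x=(1+e)/2$, where it attains exactly $(1-e)^2$. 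So I choose
\[
\alpha:=\sqrt{\tfrac{1+e}{2}}\in(0,1).
\]
Since $K'<K$, both $e^2(1+2K')<e$ and $e^2K'^2<(1-e)^2/4$ hold, and the required inequality holds strictly at $\beta=1/c_\gamma$.

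\emph{Step 3 (perturb $\beta$).} The quantity $L\alpha/L_F$ is continuous and decreasing in $\beta$. We can therefore choose $\beta$ slightly larger than $1/c_\gamma$ while keeping $L\alpha/L_F>\epsilon$, as $\beta>1/c_\gamma$ requires. All choices depend only on $e$, $d$ and $T$, that is, on $d,L,\epsilon,T$.

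\emph{Main obstacle.} The difficulty is that the naive limiting case is exactly tight: with $K'=K$ and $\beta=1/c_\gamma$, the best $\alpha$ yields only equality in $L\alpha/L_F\ge\epsilon$. The strict slack must come from the integer gap $T(T+1)<(T+1)^2$ in the budget, which lets us take $K'<K$, together with continuity in $\beta$. I would verify carefully that a single $K'<K$ simultaneously gives strict slack in both terms $1+2K'$ and $K'^2/(1-x)$; this holds because both are monotone increasing in $K'$.
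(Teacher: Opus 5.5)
Your proof is correct and is essentially the paper's argument. Both reduce the cardinality condition to $s_\alpha/c_\gamma>(T(T+1))^{1/(d-2)}$, take $\beta$ at or just above $1/c_\gamma$, use the optimal $\alpha^2=(1+\cdot)/2$, and obtain strict slack from $T(T+1)<(T+1)^2$. The only difference is where the slack sits: the paper raises the target to an intermediate $\vartheta\in(\epsilon/L,\vartheta_T)$ and sets $\beta=s_\alpha/\vartheta$, so that $L_F=\alpha/\vartheta$ exactly, then picks $c_\gamma$ as a midpoint; this replaces your continuity perturbation of $\beta$ with an explicit choice.
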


\begin{proof}
Set $\theta=\epsilon/L$ and
\[
n_T=T+1,\qquad
\chi_T=[n_T(n_T-1)]^{1/(d-2)},\qquad
\vartheta_T=\frac1{1+2\chi_T}.
\]
Here $n_T$ allows for an unqueried output, and $n_T(n_T-1)$ accounts for the pairwise differences. Since $n_T\ge2$, it follows that $\chi_T>1$. The budget condition and $n_T(n_T-1)<n_T^2$ give
\[
\chi_T<\frac{1-\theta}{2\theta},\qquad
\theta<\vartheta_T<\frac13.
\]
Choose $\vartheta=(\theta+\vartheta_T)/2$, so that $\theta<\vartheta<\vartheta_T$, and set
\begin{equation*}
\alpha=\sqrt{\frac{1+\vartheta}{2}},\qquad
s_\alpha=\sqrt{\frac{1-\vartheta}{2}},\qquad
\beta=\frac{s_\alpha}{\vartheta}.
\end{equation*}
Then $s_\alpha^2=1-\alpha^2$. The identity
\[
\vartheta^2\alpha^2+s_\alpha^2(1+\vartheta)^2
=\frac{1+\vartheta}{2}\bigl(\vartheta^2+(1-\vartheta)(1+\vartheta)\bigr)
=\alpha^2
\]
implies
\[
L_F^2=\alpha^2+(s_\alpha+\beta)^2=\frac{\alpha^2}{\vartheta^2},\qquad
\frac{L\alpha}{L_F}=L\vartheta>\epsilon.
\]
Also $\vartheta<\vartheta_T$ gives $\chi_T<s_\alpha^2/\vartheta$. Therefore
\[
0<\frac{\vartheta}{s_\alpha}<\frac{s_\alpha}{\chi_T}<s_\alpha.
\]
Define
\begin{equation*}
c_\gamma=\frac12\left(\frac{\vartheta}{s_\alpha}+\frac{s_\alpha}{\chi_T}\right),
\qquad \gamma=\sqrt{1-c_\gamma^2}.
\end{equation*}
These choices satisfy $0<c_\gamma<s_\alpha<1$ and therefore $0<\alpha<\gamma<1$. They also give $\beta c_\gamma>1$. Finally,
\[
T(T+1)\left(\frac{1-\gamma^2}{1-\alpha^2}\right)^{(d-2)/2}
=n_T(n_T-1)\left(\frac{c_\gamma}{s_\alpha}\right)^{d-2}
=\left(\frac{\chi_Tc_\gamma}{s_\alpha}\right)^{d-2}<1.
\]
\end{proof}

\subsection{A resisting-oracle argument}

With the affine responses prescribed, the finite-set construction gives one objective agreeing with the resulting finite history. Locality and determinism then reproduce that history on the fixed objective.

\begin{proof}[Proof of Theorem~\ref{thm:main}]
Fix the inputs, the budget $T$, and a deterministic algorithm $A$. Choose $\alpha,\gamma,\beta,L_F$ by Lemma~\ref{lem:parameter-choice}. First consider a run of $A$ with prescribed oracle responses, without yet choosing an objective. At each query point $\bq$, let the oracle return
\[
\mathcal O(\bq)=\ell_{\bq},\qquad
\ell_{\bq}(\bx)=\frac{L}{L_F}\mathbf e_1^\top(\bx-\bq).
\]
The first call is at $\bx_0$ and is counted. These responses determine the subsequent computation, although $\bu$ and the objective have not yet been chosen. Only a finite answered history will be realized, not an infinite sequence of prescribed responses. Retain the sequence of queries and responses up to a return, a request for call $T+1$, or internal computation that never produces another query or return. There are three cases.

\medskip
\noindent\emph{(i) Return within $T$ calls.}\enspace If $A$ returns $\widehat{\bx}$ after $1\le m\le T$ responses at $\bq^{(1)},\ldots,\bq^{(m)}$, set
\begin{equation}
S=\{\bq^{(1)},\ldots,\bq^{(m)},\widehat{\bx}\}.
\label{eq:protected-output}
\end{equation}
The output is fixed by the answered history, even if it was not queried, and $|S|\le m+1\le T+1$.

\medskip
\noindent\emph{(ii) Request for call $T+1$.}\enspace Retain the $T$ answered locations and set $S=\{\bq^{(1)},\ldots,\bq^{(T)}\}$. No response at the next requested point, and no inclusion of that point in $S$, is needed: reproducing the request already excludes a return within $T$ calls.

\medskip
\noindent\emph{(iii) No further query or return.}\enspace If, after $1\le m\le T$ responses, no further query or return is ever produced, set $S=\{\bq^{(1)},\ldots,\bq^{(m)}\}$.

These cases exhaust the possible behavior before call $T+1$; the argument does not require detecting nontermination. In every case $\bq^{(1)}=\bx_0\in S$ and $1\le N:=|S|\le T+1$. Repeated locations are removed only in forming the set, not from the query count.

\medskip
\noindent\emph{A fixed objective.}\enspace By \eqref{eq:chosen-parameters},
\[
N(N-1)\left(\frac{1-\gamma^2}{1-\alpha^2}\right)^{(d-2)/2}<1.
\]
Proposition~\ref{prop:transcript} therefore supplies $\bu$, $F$, and the fixed function $f(\bx)=\max\{LF(\bx)/L_F,-\Delta\}$ satisfying
\begin{equation}
f\in\mathcal F_d(L,\Delta;\bx_0),
\qquad
f(\bx_0)=0,
\qquad
\inf f=-\Delta,
\qquad
\interior(\arg\min f)\ne\varnothing.
\label{eq:hard-function-properties}
\end{equation}
For every $\bq\in S$,
\begin{equation}
\bigl(f(\bq),\partial f(\bq)\bigr)
=
\bigl(0,\{L\mathbf e_1/L_F\}\bigr),
\qquad
\dist(\mathbf0,\partial_\delta f(\bq))\ge L\alpha/L_F>\epsilon.
\label{eq:consistent}
\end{equation}
The local oracle requires neighborhood agreement: there is a common $\rho>0$ such that $f=\ell_{\bq}$ on $\BB_\rho^\circ(\bq)$ for every $\bq\in S$. Thus each prescribed response agrees with $f$ on the closed ball $\BB_{\rho/2}(\bq)$, as required by \eqref{eq:oracle}; the radius is not disclosed. Thus all retained oracle responses are consistent with this one fixed function. Fix an oracle that returns $\ell_{\bq}$ for $\bq\in S$ and $f$ itself elsewhere. All its responses are admissible, and both the objective and the representatives are now fixed.

\medskip
\noindent\emph{Consistency of the executions.}\enspace Run $A$ with this oracle and the same inputs. The first query and response are the same as in the run with prescribed responses. Inductively, once the two executions have received the same responses to the same queries, locality and determinism give identical internal computation and the same next action. If that action is another query in the retained history, its location lies in $S$ and receives the same affine response. If it is a return or the request for call $T+1$, that action is reproduced. If the internal computation never produces another action, it is identical in both executions. This proves agreement through the retained history and the behavior immediately following it.

In case (i), the returned point is invalid by \eqref{eq:protected-output} and \eqref{eq:consistent}; hence $\tau_A(f;\bx_0,\delta,\epsilon)=+\infty$. Including an unqueried output in $S$ imposes no additional call. In case (ii), any valid eventual return requires at least $T+1$ calls; otherwise $\tau_A=+\infty$. In case (iii), there is no finite return. Thus $\tau_A>T$ in every case. Every point actually queried among the first $T$ calls belongs to $S$ and therefore satisfies the strict nonstationarity bound in \eqref{eq:consistent}. The remaining properties are \eqref{eq:hard-function-properties}.
\end{proof}

Because all queried points in the retained history are nonstationary, the argument also excludes a guarantee that merely one of the first $T$ queried points is stationary, without requiring the algorithm to identify it.

\paragraph{Range of the parameter estimate.}
The value $1/3$ is a limit of this parameter estimate, not a threshold established for the optimization problem. Indeed, $\gamma>\alpha$ and $\beta>1/c_\gamma$ imply $\beta>1/s_\alpha$, whence
\[
\frac{\alpha^2}{L_F^2}<\frac{s_\alpha^2(1-s_\alpha^2)}{1+3s_\alpha^2}\le\frac19.
\]
The last inequality is equivalent to $(3s_\alpha^2-1)^2\ge0$. These estimates therefore cannot certify a projection of $L/3$ or larger.

\section{Deterministic upper bounds}
\label{sec:upper}

Goldstein nonstationarity implies descent over a segment of length $\delta$. Lipschitz continuity allows the descent direction to be approximated by a finite net, and the initial gap bounds the number of accepted moves. This gives a deterministic zeroth-order method. When $L/\sqrt2<\epsilon<L$, one returned subgradient suffices to choose the trial direction.

\subsection{A finite direction net}

An $\eta$-net of the unit sphere approximates every unit direction to distance at most $\eta$. The packing estimate below bounds its size; Appendix~\ref{app:net} gives a finite construction in the exact-real query model.

\begin{lemma}[A finite direction net]
\label{lem:net}
For every $d\ge1$ and $\eta>0$, there is a nonempty finite set $\mathcal V\subset\sphere^{d-1}$ such that
\begin{equation}
\text{for every }\bv\in\sphere^{d-1}
\text{ there is }\bw\in\mathcal V
\text{ with }\norm{\bv-\bw}\le\eta,
\label{eq:net-cover}
\end{equation}
and
\begin{equation}
|\mathcal V|\le(1+2/\eta)^d.
\label{eq:net-size}
\end{equation}
The set and its ordering can be constructed deterministically using finite internal computation, independently of $f$, in the exact-real model of Section~\ref{sec:model}.
\end{lemma}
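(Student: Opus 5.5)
The plan is the standard maximal-packing argument plus a volume comparison. I would not run a greedy selection over the whole sphere, since that step is not effectively finite in the exact-real model. Instead, the net will come from a finite, explicit candidate set that is itself a coarse net.

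\emph{Existence and size.} Let $\mathcal V\subset\sphere^{d-1}$ be any finite set whose points are pairwise more than $\eta$ apart, and which is maximal with respect to this property. Maximality gives \eqref{eq:net-cover}: if some $\bv$ were at distance more than $\eta$ from every point of $\mathcal V$, it could be added.

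For \eqref{eq:net-size}, the open balls $\BB^\circ_{\eta/2}(\bw)$, $\bw\in\mathcal V$, are pairwise disjoint. They are all contained in $\BB^\circ_{1+\eta/2}(\mathbf0)$. Comparing Lebesgue volumes gives
\[
|\mathcal V|(\eta/2)^d\le(1+\eta/2)^d,
\]
which is $|\mathcal V|\le(1+2/\eta)^d$. The same volume bound shows that any $\eta$-separated subset of the sphere has at most $(1+2/\eta)^d$ elements. So a greedy process that keeps adding points at distance more than $\eta$ from the chosen ones must stop after finitely many steps. It is then maximal, and nonempty because it starts from $\mathbf e_1$.

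\emph{Finite deterministic construction.} The delicate step is making ``add a point if one exists'' decidable with finite computation. I would first fix an explicit finite candidate set $\mathcal C$ that is an $(\eta/3)$-net of the sphere. For example, take the normalized nonzero points of the grid $(\eta/(3\sqrt d))\mathbb Z^d\cap[-1,1]^d$. For a unit $\bv$, the nearest grid point $\bz$ satisfies $\norm{\bz-\bv}\le\eta/6$. Normalizing at most doubles this error, since
\[
\norm{\bz/\norm{\bz}-\bv}\le 2\norm{\bz-\bv}.
\]
So every unit vector is within $\eta/3$ of some element of $\mathcal C$. Also $\bz\ne\mathbf0$ whenever $\eta/6<1$; for larger $\eta$, simply take $\mathcal V=\{\mathbf e_1\}$, which works since any two unit vectors are within distance $2$.

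Next, run the greedy selection over $\mathcal C$ in a fixed order, keeping a candidate only if it is more than $2\eta/3$ away from all kept candidates. This needs only finitely many exact comparisons. The output $\mathcal V$ is $(2\eta/3)$-separated, so the packing bound above with $2\eta/3$ gives $|\mathcal V|\le(1+3/\eta)^d$. That is weaker than the claimed bound.

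\emph{Expected obstacle and fix.} The main difficulty is reconciling exact finite computability with the sharp constant $2/\eta$. I would handle it by choosing the coarse grid resolution $\eta'\ll\eta$ and running the greedy selection over $\mathcal C$ with separation threshold $\eta-\eta'$. Maximality within $\mathcal C$ then gives, for every unit $\bv$, a candidate within $\eta'$ and a kept point within $\eta-\eta'$ of that candidate. The covering radius is therefore at most $\eta$, giving \eqref{eq:net-cover}.

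The separation is $\eta-\eta'$, so the packing bound gives $(1+2/(\eta-\eta'))^d$, still slightly too large. To remove the loss, I would observe that the set is finite. Since the kept points are pairwise more than $\eta-\eta'$ apart and there are finitely many of them, their minimal pairwise distance is some $\sigma>\eta-\eta'$. For the sharp constant we need $\sigma\ge\eta$; if that fails, I would instead fall back on the existence argument above, choosing among the finitely many $\eta$-separated subsets of $\mathcal C$. Alternatively, I would simply cite the detailed construction deferred to Appendix~\ref{app:net} for this point.
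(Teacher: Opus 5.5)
\textbf{There is a genuine gap in your deterministic construction, and it is exactly where you expected it.} Your existence argument (a maximal $\eta$-separated set, with disjoint balls $\BB^\circ_{\eta/2}(\bw)$ inside $\BB^\circ_{1+\eta/2}(\mathbf0)$) is the same as the paper's. But the lemma also asserts that the net can be built by finite deterministic computation, and your route does not deliver covering radius $\eta$ together with the sharp size bound $(1+2/\eta)^d$.

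Your grid greedy with threshold $\eta-\eta'$ gives an $\eta$-net of size at most $(1+2/(\eta-\eta'))^d$. Neither proposed fix closes the remaining loss:
\begin{itemize}
\item Finiteness gives a minimal pairwise distance $\sigma>\eta-\eta'$, but nothing forces $\sigma\ge\eta$.
\item The fallback of choosing among the $\eta$-separated subsets of $\mathcal C$ fails. Maximality inside $\mathcal C$ only yields covering radius $\eta+\eta'$, and nothing guarantees that any such subset covers the whole sphere at radius $\eta$. Checking whether one does is a statement quantified over the continuum, which is precisely the test you set out to avoid.
\item Citing Appendix~\ref{app:net} defers to the paper rather than proving the claim.
\end{itemize}
There is also a small slip: the grid point nearest to a unit vector can lie outside $[-1,1]^d$, so use a larger cube.

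\textbf{How the paper avoids the issue.} Your premise that greedy selection over the whole sphere is not effectively finite is mistaken, and the paper runs exactly that selection. Whether some unit $\bx$ satisfies $\norm{\bx-\bw_j}>\eta$ for all selected $\bw_j$ is an existential first-order formula with rational polynomial data. Real quantifier elimination decides it with exact arithmetic. When the formula holds, the admissible set is nonempty and relatively open. It therefore contains a point of a fixed dense enumeration of rational sphere points, obtained by inverse stereographic projection of $\mathbb Q^{d-1}$ together with $-\mathbf e_d$, and that point is found after finitely many checks.

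\textbf{How to repair your own route.} If you prefer a finite candidate set, the missing ingredient is slack in the volume bound. For $\rho<2$, the balls $\BB^\circ_{\rho/2}(\bw)$ lie in the shell $1-\rho/2<\norm{\bx}<1+\rho/2$. This gives $|\mathcal V|\le(1+2/\rho)^d-(2/\rho-1)^d$ for any $\rho$-separated subset of the sphere. At $\rho=\eta<2$ this is strictly below $(1+2/\eta)^d$. By continuity, some $\eta'>0$ keeps the bound at $\rho=\eta-\eta'$ at most $(1+2/\eta)^d$, and such an $\eta'$ can be found by halving and exact testing. With that $\eta'$, your greedy over an $\eta'$-net of candidates gives the full statement.
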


\begin{proof}
For $\eta\ge2$, a fixed unit vector suffices. For $0<\eta<2$, let $\bw_1,\ldots,\bw_M$ be unit vectors with pairwise distances greater than $\eta$. The balls $\BB_{\eta/2}^\circ(\bw_j)$ are disjoint and contained in $\BB_{1+\eta/2}^\circ(\mathbf0)$. Comparing their volumes gives
\[
M(\eta/2)^d\le(1+\eta/2)^d,
\qquad M\le(1+2/\eta)^d.
\]
Starting with a fixed unit vector, append a point at distance greater than $\eta$ from all previously selected points whenever one exists. The packing bound forces this process to stop, yielding \eqref{eq:net-cover} with the size bound \eqref{eq:net-size}.

For a deterministic implementation, one must decide whether another point can be added. Appendix~\ref{app:net} gives a finite decision test and selects an admissible point from a fixed dense rational enumeration whenever the test succeeds. The insertion order gives the required ordering; no query to $f$ is used.
\end{proof}

\Needspace{14\baselineskip}
\subsection{A descent step of length \texorpdfstring{$\delta$}{delta}}

A separating direction controls every subgradient along the corresponding segment in $\BB_\delta(\bx)$. This permits a step of length $\delta$, rather than one whose admissible length depends on unknown local behavior.

\Needspace{11\baselineskip}
\begin{lemma}[Descent from Goldstein nonstationarity]
\label{lem:goldstein-descent}
Let $f:\RR^d\to\RR$ be globally $L$-Lipschitz, let $\delta>0$, and assume
\[
\dist(\mathbf0,\partial_\delta f(\bx))>\epsilon>0.
\]
Then there is a unit vector $\bv\in\RR^d$ such that
\begin{equation}
f(\bx-\delta\bv)<f(\bx)-\delta\epsilon.
\label{eq:full-radius-descent}
\end{equation}
\end{lemma}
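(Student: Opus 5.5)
Take $\bg^\star$ to be the minimum-norm element of $\partial_\delta f(\bx)$. By Lemma~\ref{lem:compactness} this set is nonempty, compact and convex, so $\bg^\star$ exists and is unique, and the hypothesis gives $\norm{\bg^\star}>\epsilon$. Put $\bv=\bg^\star/\norm{\bg^\star}$. The projection characterization of the minimum-norm point of a closed convex set gives $\ip{\bg}{\bg^\star}\ge\norm{\bg^\star}^2$ for every $\bg\in\partial_\delta f(\bx)$. Hence
\[
\ip{\bg}{\bv}\ge\norm{\bg^\star}>\epsilon\qquad\text{for every }\bg\in\partial f(\by),\ \by\in\BB_\delta(\bx).
\]
This is the separating direction we need.

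Next, transfer this bound to the segment. Define $\psi(t)=f(\bx-t\bv)$ for $t\in[0,\delta]$. It is Lipschitz, so it is absolutely continuous and differentiable almost everywhere. Every point $\bx-t\bv$ with $t\in[0,\delta]$ lies in $\BB_\delta(\bx)$. I would use Lebourg's mean value theorem \cite{Clarke1990}: there is $t\in(0,\delta)$ and $\bg\in\partial f(\bx-t\bv)$ with
\[
f(\bx-\delta\bv)-f(\bx)=\ip{\bg}{-\delta\bv}.
\]
The right-hand side equals $-\delta\ip{\bg}{\bv}\le-\delta\norm{\bg^\star}<-\delta\epsilon$, which is \eqref{eq:full-radius-descent}. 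Strictness comes from $\norm{\bg^\star}>\epsilon$ together with $\delta>0$.

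As an alternative that avoids citing Lebourg, one can argue through the chain rule. At almost every $t$, $\psi'(t)$ is bounded above by $f^\circ(\bx-t\bv;-\bv)=\max_{\bg\in\partial f(\bx-t\bv)}\ip{\bg}{-\bv}\le-\norm{\bg^\star}$. Integrating over $[0,\delta]$ gives $\psi(\delta)-\psi(0)\le-\delta\norm{\bg^\star}$.

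The main point needing care is the passage from the Clarke subdifferential to the actual change in $f$ along the segment. Pointwise subgradient bounds say nothing by themselves about differences of function values, so a mean value theorem or the support-function identity $f^\circ(\by;\cdot)=\max_{\partial f(\by)}\ip{\cdot}{\cdot}$ is genuinely needed. A second point is to check that the endpoints $\bx$ and $\bx-\delta\bv$ lie in the closed ball $\BB_\delta(\bx)$, which the definition \eqref{eq:goldstein-definition} uses. Everything else is elementary.
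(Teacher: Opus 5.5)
Your proof is correct and is essentially the paper's argument. Both take the minimum-norm element of $\partial_\delta f(\bx)$ and use the projection inequality to get $\ip{\bg}{\bv}\ge\norm{\bg^\star}>\epsilon$ on the whole Goldstein set. Both then apply Lebourg's mean value theorem on the segment, whose intermediate point lies in $\BB_\delta(\bx)$. Your a.e.\ chain-rule alternative is also valid but is not needed.
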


\begin{proof}
Let $C=\partial_\delta f(\bx)$, which is nonempty, compact, and convex by Lemma~\ref{lem:compactness}, and choose a minimum-norm element $\bg_*\in C$. For $\bg\in C$ and $0<s\le1$, minimality gives
\[
0\le\norm{\bg_*+s(\bg-\bg_*)}^2-\norm{\bg_*}^2
=2s\ip{\bg_*}{\bg-\bg_*}+s^2\norm{\bg-\bg_*}^2.
\]
Dividing by $s$ and letting $s\downarrow0$ yields $\ip{\bg_*}{\bg-\bg_*}\ge0$. Since $\norm{\bg_*}>\epsilon$, the unit vector $\bv=\bg_*/\norm{\bg_*}$ satisfies
\[
\ip{\bg}{\bv}\ge\norm{\bg_*}>\epsilon\qquad(\bg\in C).
\]
Lebourg's mean value theorem \cite[Theorem~2.3.7]{Clarke1990} gives a point $\bz=\bx-\theta\delta\bv$, $\theta\in(0,1)$, and a vector $\bg\in\partial f(\bz)$ such that
\[
f(\bx-\delta\bv)-f(\bx)=-\delta\ip{\bg}{\bv}.
\]
Since $\bz\in\BB_\delta(\bx)$, the vector satisfies $\bg\in C$. The separation inequality therefore gives
\[
f(\bx-\delta\bv)-f(\bx)\le-\delta\norm{\bg_*}<-\delta\epsilon.
\]
\end{proof}

\subsection{Direction-net descent and query complexity}

Each iteration scans the same ordered net. Its radius is chosen to retain at least half of the decrease in Lemma~\ref{lem:goldstein-descent}. Values at accepted trial points are retained for the next iteration.

For $0<\epsilon<L$, set $\eta=\epsilon/(2L)$ and fix an ordered net $\mathcal V=(\bw_1,\ldots,\bw_M)$ from Lemma~\ref{lem:net}, with
\begin{equation}
M
\le
(1+2/\eta)^d
=
\left(1+\frac{4L}{\epsilon}\right)^d.
\label{eq:chosen-net-size}
\end{equation}
Algorithm~\ref{alg:net} queries every direction in the ordered net, then moves along the first one giving a decrease of at least $\delta\epsilon/2$. If no direction gives this decrease, it returns the current point.

\begin{algorithm}[H]
\caption{Deterministic direction-net descent}
\label{alg:net}
\begin{algorithmic}[1]
\REQUIRE
$L,\delta>0$, $0<\epsilon<L$, an initial point $\bx_0$.
\STATE
Construct and order an $\epsilon/(2L)$-net
$\mathcal V=(\bw_1,\ldots,\bw_M)$ of $\sphere^{d-1}$.
\STATE
Query $f(\bx_0)$; set $\bx\leftarrow\bx_0$ and
$a\leftarrow f(\bx_0)$.
\LOOP
\STATE
Query $b_j=f(\bx-\delta\bw_j)$ for every $j=1,\ldots,M$.
\STATE
Set $J=\{j:b_j\le a-\delta\epsilon/2\}$.
\IF{$J=\varnothing$}
\STATE
Return $\bx$.
\ELSE
\STATE
Set $j_*=\min J$.
\STATE
Set $\bx\leftarrow\bx-\delta\bw_{j_*}$ and
$a\leftarrow b_{j_*}$.
\ENDIF
\ENDLOOP
\end{algorithmic}
\end{algorithm}

\begin{proof}[Proof of Theorem~\ref{thm:upper}]
First, consider $0<\epsilon<L$. Lemma~\ref{lem:net} and Appendix~\ref{app:net} make the preprocessing finite and deterministic. The stored value is always $a=f(\bx)$, because accepted trial values are reused.

If $\bx$ is not $(\delta,\epsilon)$-Goldstein stationary, Lemma~\ref{lem:goldstein-descent} gives a unit vector $\bv$ satisfying \eqref{eq:full-radius-descent}. Choose $\bw_j$ with $\norm{\bw_j-\bv}\le\eta$. Then
\begin{align*}
f(\bx-\delta\bw_j)
&\le f(\bx-\delta\bv)+L\delta\norm{\bw_j-\bv}\\
&<f(\bx)-\delta\epsilon+L\delta\eta
=f(\bx)-\delta\epsilon/2.
\end{align*}
Thus $j\in J$. The stopping condition $J=\varnothing$ therefore certifies stationarity.

Each accepted move decreases the value by at least $\delta\epsilon/2$. Let $\bx^{(m)}$ denote the point reached after $m$ accepted moves. For every finite $m$,
\[
m\frac{\delta\epsilon}{2}
\le f(\bx_0)-f(\bx^{(m)})
\le f(\bx_0)-\inf f\le\Delta.
\]
Consequently,
\begin{equation}
m\le\frac{2\Delta}{\delta\epsilon}.
\label{eq:move-count}
\end{equation}
Since every scan is finite, the move bound \eqref{eq:move-count} forces termination. There are $m$ successful scans and one final unsuccessful scan. Counting the initial call and all $M$ trials in each scan, and using \eqref{eq:chosen-net-size}, gives
\begin{align*}
\tau_A(f;\bx_0,\delta,\epsilon)
&\le1+(m+1)M\\
&\le1+\left(\frac{2\Delta}{\delta\epsilon}+1\right)
\left(1+\frac{4L}{\epsilon}\right)^d.
\end{align*}
Repeated queries are charged again in this count; reusing stored values cannot increase it.

If $\epsilon\ge L$, Lemma~\ref{lem:compactness} gives $\dist(\mathbf0,\partial_\delta f(\bx_0))\le L\le\epsilon$. The prescribed initial call followed by a return therefore suffices.
\end{proof}

The separating direction is used only in the analysis. The algorithm compares values, and its stopping test is sufficient but not necessary for stationarity. An accepted move is not excluded at a stationary iterate and remains subject to the same decrease bound.

\subsection{A first-order bound for coarse accuracy}
\label{sec:coarse}

A single subgradient need not point in a useful descent direction. When $L/\sqrt2<\epsilon<L$, however, an unsuccessful trial certifies stationarity: the current subgradient and one subgradient on the trial segment have an average of sufficiently small norm. The oracle returns $f(\bx)$ and one $\bg\in\partial f(\bx)$. The guarantee holds for every valid subgradient selection. No effective extraction from an arbitrary local-function representation is assumed.

\begin{proposition}[Dimension-free first-order query bound]
\label{prop:coarse-upper}
Let $d\ge1$, $L,\Delta,\delta>0$, $\bx_0\in\RR^d$, and $L/\sqrt2<\epsilon<L$. Algorithm~\ref{alg:coarse} returns a $(\delta,\epsilon)$-Goldstein stationary point of every $f\in\mathcal F_d(L,\Delta;\bx_0)$ within
\begin{equation}
2+\frac{\Delta L}{\delta(2\epsilon^2-L^2)}
\label{eq:coarse-upper}
\end{equation}
first-order oracle calls.
\end{proposition}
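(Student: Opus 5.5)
The plan is to analyze a one-subgradient version of the Goldstein step, with a fixed acceptance threshold
\[
c:=\frac{2\epsilon^2-L^2}{L},
\]
which is positive exactly because $\epsilon>L/\sqrt2$. I take Algorithm~\ref{alg:coarse} to be the following procedure. At the current point $\bx$ the algorithm holds $a=f(\bx)$ and the returned subgradient $\bg\in\partial f(\bx)$.
\begin{enumerate}[(1)]
\item If $\norm{\bg}\le\epsilon$, it returns $\bx$. This is valid because $\bg\in\partial_\delta f(\bx)$.
\item Otherwise it sets $\bv=\bg/\norm{\bg}$ and queries $\by=\bx-\delta\bv$, receiving $f(\by)$ and some $\bg'\in\partial f(\by)$.
\item If $f(\by)\le a-\delta c$, it moves to $\by$, keeping $(f(\by),\bg')$ as the new stored pair. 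If not, it returns $\bx$.
\end{enumerate}
Reusing the pair returned at an accepted trial point means each iteration costs exactly one oracle call.

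The key step is to show that an unsuccessful trial certifies $(\delta,\epsilon)$-Goldstein stationarity of $\bx$. Lebourg's mean value theorem \cite[Theorem~2.3.7]{Clarke1990}, applied as in Lemma~\ref{lem:goldstein-descent}, gives a point $\bz$ on the segment $[\bx,\by]\subset\BB_\delta(\bx)$ and a vector $\bh\in\partial f(\bz)$ with
\[
f(\by)-f(\bx)=-\delta\ip{\bh}{\bv}.
\]
Rejection of the trial means $f(\by)>a-\delta c$, so $\ip{\bh}{\bv}<c$, and hence $\ip{\bg}{\bh}=\norm{\bg}\ip{\bv}{\bh}<\norm{\bg}c\le Lc$. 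Both $\bg$ and $\bh$ lie in $\partial_\delta f(\bx)$, and so does their midpoint. Using $\norm{\bg},\norm{\bh}\le L$ from Lemma~\ref{lem:compactness},
\[
\Bigl\|\tfrac12(\bg+\bh)\Bigr\|^2=\tfrac14\bigl(\norm{\bg}^2+2\ip{\bg}{\bh}+\norm{\bh}^2\bigr)<\tfrac14(2L^2+2Lc)=\epsilon^2 .
\]
This is the obstacle step: the threshold $c$ has to be tuned so that this midpoint bound closes, and that tuning is exactly where the restriction $\epsilon>L/\sqrt2$ enters. The vector $\bh$ is used only in the analysis, so the certificate holds for every valid subgradient selection by the oracle.

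For the call count, each accepted move lowers the function value by at least $\delta c$. After $m$ accepted moves, $m\delta c\le f(\bx_0)-\inf f\le\Delta$, so
\[
m\le\frac{\Delta}{\delta c}=\frac{\Delta L}{\delta(2\epsilon^2-L^2)}.
\]
The calls consist of the initial call at $\bx_0$, one call per accepted move, and at most one final rejected trial. Early returns at step~(1) use no extra call. The total is therefore at most $1+m+1$, which gives \eqref{eq:coarse-upper}. The bound is independent of $d$ because neither $c$ nor the move count depends on the dimension.
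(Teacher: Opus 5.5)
Your proof is correct and follows the paper's argument essentially verbatim. It uses the same single trial step of length $\delta$ along $-\bg/\norm{\bg}$ with threshold $s_{\mathrm{dec}}=(2\epsilon^2-L^2)/L$, and Lebourg's mean value theorem to produce $\mathbf h$ on the segment. The midpoint bound $\norm{(\bg+\mathbf h)/2}^2\le\epsilon^2$ then certifies stationarity on rejection, and the count is one initial call, at most $\Delta/(\delta s_{\mathrm{dec}})$ accepted moves, and one final trial. The only difference is the boundary convention: you accept when the decrease equals exactly $\delta s_{\mathrm{dec}}$, whereas Algorithm~\ref{alg:coarse} rejects then. This is immaterial, since your analysis goes through unchanged with $\le$ in place of $<$ in the rejection case.
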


Set $s_{\mathrm{dec}}=(2\epsilon^2-L^2)/L>0$. The algorithm tries a step of length $\delta$ opposite to the returned subgradient and accepts it only when the decrease exceeds $\delta s_{\mathrm{dec}}$.

\begin{algorithm}[H]
\caption{Single-direction descent for coarse accuracy}
\label{alg:coarse}
\begin{algorithmic}[1]
\REQUIRE $L,\delta>0$, $L/\sqrt2<\epsilon<L$, and an initial point $\bx_0$.
\STATE Set $s_{\mathrm{dec}}=(2\epsilon^2-L^2)/L$.
\STATE Query $(f(\bx_0),\bg_0)$; set $\bx=\bx_0$, $a=f(\bx_0)$, and $\bg=\bg_0$.
\LOOP
\IF{$\norm{\bg}\le\epsilon$}
\STATE Return $\bx$.
\ENDIF
\STATE Set $\by=\bx-\delta\bg/\norm{\bg}$. Query $b=f(\by)$ and a subgradient $\bg_{\by}\in\partial f(\by)$.
\IF{$a-b\le\delta s_{\mathrm{dec}}$}
\STATE Return $\bx$.
\ENDIF
\STATE Set $\bx\leftarrow\by$, $a\leftarrow b$, and $\bg\leftarrow\bg_{\by}$.
\ENDLOOP
\end{algorithmic}
\end{algorithm}

\begin{proof}
The stored data satisfy $a=f(\bx)$ and $\bg\in\partial f(\bx)$. If $\norm{\bg}\le\epsilon$, then $\partial f(\bx)\subseteq\partial_\delta f(\bx)$ certifies the return.

Otherwise, let $\bv=\bg/\norm{\bg}$ and $\by=\bx-\delta\bv$. If the trial gives $f(\bx)-f(\by)\le\delta s_{\mathrm{dec}}$, Lebourg's mean value theorem gives $\bz$ in the open segment from $\bx$ to $\by$ and $\mathbf h\in\partial f(\bz)$ such that
\[
f(\by)-f(\bx)=-\delta\ip{\mathbf h}{\bv},
\qquad \ip{\mathbf h}{\bv}\le s_{\mathrm{dec}}.
\]
Both $\bg$ and $\mathbf h$ belong to $\partial_\delta f(\bx)$ and have norm at most $L$. Their average belongs to the same convex set, and
\begin{align*}
\left\|\frac{\bg+\mathbf h}{2}\right\|^2
&=\frac{\norm{\bg}^2+\norm{\mathbf h}^2+2\ip{\bg}{\mathbf h}}4\\
&\le\frac{2L^2+2\norm{\bg}s_{\mathrm{dec}}}{4}\\
&\le\frac{2L^2+2Ls_{\mathrm{dec}}}{4}
=\epsilon^2.
\end{align*}
Here $s_{\mathrm{dec}}>0$ justifies replacing $\norm{\bg}$ by $L$. The return is therefore valid. The vector $\mathbf h$ is supplied by Lebourg's theorem, not by the query at the endpoint. It need not equal $\bg_{\by}$, and the algorithm does not compute it.

Every accepted move decreases the value by more than $\delta s_{\mathrm{dec}}$. Thus any finite number $m$ of accepted moves satisfies
\[
m\delta s_{\mathrm{dec}}\le f(\bx_0)-\inf f\le\Delta,
\qquad
m\le\frac{\Delta}{\delta s_{\mathrm{dec}}}.
\]
Finite work in each iteration and this bound force termination. There is one initial call, one call per accepted move, and at most one final unsuccessful trial, giving at most $m+2$ calls and hence \eqref{eq:coarse-upper}.
\end{proof}

As $\epsilon\downarrow L/\sqrt2$, the denominator in \eqref{eq:coarse-upper} vanishes. The argument gives no guarantee at the endpoint and does not establish optimal behavior near it.

\section{Conclusions and discussion}
\label{sec:scope}

For fixed positive $L,\Delta,\delta$ with $\delta<\Delta/L$ and a fixed ratio $\epsilon/L\in(0,1/3)$, deterministic query complexity is exponential in dimension, even in the local-oracle model of Section~\ref{sec:model}. Function values alone suffice for the same exponential order. In contrast, the first-order method has a dimension-free query bound when $L/\sqrt2<\epsilon<L$.

\subsection{Accuracy and information}
The dependence on accuracy remains unresolved in part. The present bounds do not determine the dimension dependence for $1/3\le\epsilon/L\le1/\sqrt2$, nor do they match the joint dependence on dimension and accuracy. Even at the parameters of Corollary~\ref{cor:theta-d}, the exponential constants are $\tfrac12\log(7/2)$ and $\log33$. The calculation after Theorem~\ref{thm:main} limits the projection estimate for the present construction to $\epsilon<L/3$; it does not establish a transition in the optimization problem. One question is whether the common projection captures the full distance of the Goldstein subdifferential from the origin.

The lower bound is insensitive to the choice between full local information and a value with one Clarke subgradient, because the constructed objective is affine near every queried point in the construction and has a singleton subdifferential there. The zeroth-order upper bound works in both models.

Determinism is essential to the resisting-oracle argument: the prescribed responses fix the history. The construction applied to different random seeds is not required to produce the same objective. Lemma~\ref{lem:direction} therefore does not provide a fixed hard instance with a failure probability for a randomized algorithm.

The bounds concern exact queries, not total computation. No upper bound on direction-net preprocessing time is claimed, and the number of branches in the hard instance depends on the budget. Its interpolation radius can be arbitrarily small when queries are close, although $L_F$ and the common projection do not depend on that radius. No representation-size bound, minimum feature size, or gradient Lipschitz bound is assumed.

\subsection{Quantitative regularity}
Local affine agreement does not make the hard family globally smooth. Appendix~\ref{app:weak-convexity} gives a two-branch member that is not weakly convex for any finite parameter. Results requiring smoothness or weak convexity therefore do not cover the whole family.

Every finite hard instance is nevertheless DC by Appendix~\ref{app:dc}. In comparing with Kong and Lewis \cite{KongLewis}, the relevant issue is quantitative nonconvexity, not DC membership alone. Their directional oracle returns values, directional derivatives, and direction-dependent vectors. The complexity depends on an objective-specific nonconvexity modulus $\Lambda(\delta)$ defined through one-dimensional restrictions. To relate it to a DC decomposition, define
\[
M_{\rm DC}(f)=\inf\{\Lip(q):q\text{ is convex and }f+q\text{ is convex}\}.
\]
Here a feasible $q$ is a convex correction, $\Lip(q)=+\infty$ if it is not globally Lipschitz, and $\inf\varnothing=+\infty$. Proposition~6.3 of Kong and Lewis \cite{KongLewis} gives $\Lambda(\delta)\le\Lip(q)$ for every Lipschitz convex correction, and hence $\Lambda(\delta)\le M_{\rm DC}(f)$. Appendix~\ref{app:dc} gives
\[
M_{\rm DC}(f)\le L(N-1).
\]
This estimate comes from one decomposition. It neither proves that an intrinsic nonconvexity parameter grows with $N$ nor bounds that parameter in terms of $L$ alone. Estimating $M_{\rm DC}(f)$ over all corrections would separate the cost of this representation from the objective's intrinsic nonconvexity. More broadly, it remains to identify quantitative structural assumptions under which a specified oracle admits polynomial dimension dependence.

\Needspace{12\baselineskip}
\appendix

\section{Initial-point conventions and stopping}
\label{app:model}

The following observations relate the prescribed-initial-point model to other initialization and stopping conventions. Choosing the initial point before receiving oracle information incurs no query loss; a free initial response shifts the count by one.

\begin{proposition}[Transfer to an algorithm-chosen initial point]
\label{prop:model-transfer}
Consider a deterministic algorithm that chooses its initial point from $(d,L,\Delta,\delta,\epsilon)$ before seeing any oracle information, makes its first query there, and counts that call. A uniform $T$-call guarantee under that convention induces a uniform $T$-call guarantee in the fixed-initial-point model at the particular initial point it chooses. Consequently, Theorem~\ref{thm:main}, which holds for every prescribed $\bx_0$, transfers to that convention without losing a query.
\end{proposition}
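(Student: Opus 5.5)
The plan is a direct simulation argument. Let $B$ be a deterministic algorithm under the algorithm-chosen convention, and suppose it has a uniform $T$-call guarantee. Since $B$ picks its initial point before receiving any oracle information, that point is a fixed function $\bx_0^B=\bx_0^B(d,L,\Delta,\delta,\epsilon)$ of the inputs. From $B$ I would build an algorithm $A$ for the fixed-initial-point model with $\bx_0=\bx_0^B$. On input $(d,L,\Delta,\bx_0,\delta,\epsilon)$, $A$ ignores $\bx_0$, computes $\bx_0^B$ internally, and then runs $B$ step for step. This computation is finite and uses no queries. The prescribed first call of $A$ is at $\bx_0$, which here equals $\bx_0^B$, so it coincides with the first counted call of $B$. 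Every later query, the stopping decision and the output of $A$ are exactly those of $B$.

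Next I would check that $A$ is admissible and that the query counts agree. By the locality convention of Section~\ref{sec:model}, the execution of $B$ depends only on local information in the oracle responses. Hence for every $f\in\mathcal F_d(L,\Delta;\bx_0^B)$ and every admissible oracle, $A$ and $B$ produce identical transcripts, which an induction on the number of received responses confirms. The function class is the same on both sides because $\bx_0=\bx_0^B$. Both conventions count the initial call, so $\tau_A(f;\bx_0^B,\delta,\epsilon)$ equals $B$'s count and is at most $T$. This gives a uniform $T$-call guarantee at $\bx_0^B$.

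For the transfer, suppose $T\ge1$ satisfies \eqref{eq:budget}. Theorem~\ref{thm:main} holds for every prescribed $\bx_0$, in particular for $\bx_0=\bx_0^B$. It gives an $f$ with $\tau_A>T$, which contradicts the guarantee just derived. So no algorithm under the algorithm-chosen convention has a uniform $T$-call guarantee, with the same budget and no loss of a query.

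The only delicate point is the bookkeeping of the initial point: the hard instance must be measured against the gap at $\bx_0^B$. I would handle this by choosing the target class after $B$ is fixed. This is legitimate because $\bx_0^B$ depends only on the inputs, and Theorem~\ref{thm:main}'s quantifier order $\forall A\,\exists f$ allows $f$ to depend on $A$ and therefore on $\bx_0^B$.
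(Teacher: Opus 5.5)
Your proposal is correct and follows essentially the same simulation argument as the paper: the initial point $\bx_0^B$ is determined by the input alone, so you replay $B$'s execution in the fixed-initial-point model at $\bx_0=\bx_0^B$ with the same charged first call, and this contradicts Theorem~\ref{thm:main} at that prescribed point. One detail the paper states explicitly that you only assert is that $B$'s pre-query computation terminates; this follows from the uniform $T$-call guarantee itself, since nontermination there would give $\tau=+\infty$ on every admissible objective.
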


\begin{proof}
Fix the input parameters and the algorithm. If preprocessing does not terminate, there is no finite-return guarantee. Otherwise the chosen initial point $\bx_0^A$ and the internal state before the first query are determined by the input alone. In the prescribed-initial-point model with $\bx_0=\bx_0^A$, perform the same preprocessing and follow the algorithm. The two executions coincide on every $f\in\mathcal F_d(L,\Delta;\bx_0^A)$, with the same charged first call. A uniform $T$-call guarantee would therefore contradict Theorem~\ref{thm:main} at this prescribed point.
\end{proof}

Thus the algorithm-chosen convention in \cite[Section~2]{DNN} incurs no query loss in applying the lower bound.

\paragraph{Free initial responses.}

If the initial response at $\bx_0$ is free and only $J$ subsequent calls are counted, charge once for that response to obtain the convention used here. Conversely, giving the initial response for free removes precisely that call. Hence,
\[
T=J+1,
\qquad
T+1=J+2.
\]
The set $S$ used in the construction has at most $J+2$ points: the initial point, $J$ subsequent locations, and a possible unqueried output. At the fixed accuracy parameters of Corollary~\ref{cor:theta-d}, the lower bound on additional calls is therefore $\lfloor(7/2)^{(d-2)/2}\rfloor-1$, leaving the exponential order unchanged.

\paragraph{Before the initial call.}

Allowing a return before the initial call does not change the lower bound. Such an output $\widehat{\bx}$ is fixed by the input alone. For a budget covered by Theorem~\ref{thm:main}, choose the parameters of Lemma~\ref{lem:parameter-choice} and take
\[
S=\{\bx_0,\widehat{\bx}\}.
\]
This set has at most two distinct points, and hence at most $T+1$. Proposition~\ref{prop:transcript} applies and gives an admissible objective with
\[
\dist(\mathbf0,\partial_\delta f(\widehat{\bx}))\ge\frac{L\alpha}{L_F}>\epsilon.
\]
The no-query execution returns that same invalid point.

If pre-query computation never terminates, it fails on every admissible objective, including the zero function. Otherwise the initial call is reached and the proof of Theorem~\ref{thm:main} applies. Only the stopping convention changes.

\section{Finite deterministic construction of the direction net}
\label{app:net}

The packing argument bounds the number of net points but does not determine when the sphere is covered. A finite decision test and a deterministic selection rule complete the construction. Both are independent of $f$; computational efficiency is not required.

For $d=1$, use the ordered set $(-\mathbf e_1,\mathbf e_1)$ when $0<\eta<2$, and the singleton $\{\mathbf e_1\}$ when $\eta\ge2$. Their cardinalities satisfy the bound in Lemma~\ref{lem:net}. Assume $d\ge2$ below.

\subsection{Rational points on the sphere}
For a rational vector $\mathbf t\in\mathbb Q^{d-1}$, define
\begin{equation*}
R(\mathbf t)
=
\left(
\frac{2\mathbf t}{1+\norm{\mathbf t}^2},
\frac{1-\norm{\mathbf t}^2}{1+\norm{\mathbf t}^2}
\right)
\in\RR^d.
\end{equation*}
With $s=\norm{\mathbf t}^2$, direct substitution gives
\[
\norm{R(\mathbf t)}^2=\frac{4s+(1-s)^2}{(1+s)^2}=1.
\]
Thus $R$ sends rational vectors to rational sphere points. The map extends continuously to $\RR^{d-1}$ and parametrizes the sphere except for $-\mathbf e_d$, with inverse $\mathbf t=\bw'/(1+w_d)$ for $\bw=(\bw',w_d)$ and $w_d\ne-1$. Density of rational parameters shows that $R(\mathbb Q^{d-1})\cup\{-\mathbf e_d\}$ is dense in the sphere. Enumerate $\mathbb Q^{d-1}$ by increasing bounds on absolute numerators and positive denominators, using lexicographic order within each finite bound and skipping repetitions. Applying $R$ and inserting $-\mathbf e_d$ at a fixed position gives a dense enumeration independent of the objective.

\subsection{A stopping test and finite termination}
For an ordered set $\mathcal V=(\bw_1,\ldots,\bw_M)$ of selected rational points, test
\begin{equation}
\exists\bx\in\RR^d:
\quad
\sum_{\ell=1}^d x_\ell^2=1,
\quad
\sum_{\ell=1}^d(x_\ell-w_{j,\ell})^2>\eta^2
\quad(j=1,\ldots,M).
\label{eq:cover-decision}
\end{equation}
Here $x_\ell$ and $w_{j,\ell}$ denote vector coordinates. For $M=0$, only the sphere equation remains, and the test is true. With $\eta$ as a free real parameter, \eqref{eq:cover-decision} is an existential formula with rational polynomial coefficients. Real quantifier elimination \cite{BPR2006} gives an equivalent finite combination of polynomial sign conditions on $\eta$. These can be evaluated by the exact arithmetic and comparisons allowed in Section~\ref{sec:model}, without querying $f$.

If the test is false, $\mathcal V$ is an $\eta$-net. If it is true, the set
\[
U_{\mathcal V}=\{\bx\in\sphere^{d-1}:\norm{\bx-\bw_j}>\eta
\text{ for }j=1,\ldots,M\}
\]
is nonempty and relatively open. It therefore contains a point of the dense enumeration. Append the first enumerated point satisfying the strict inequalities, and repeat the decision test. The search terminates because some admissible point has a finite index. Each selected set is strictly $\eta$-separated and hence has size at most $(1+2/\eta)^d$. Since every decision and selection is finite, the procedure terminates with the required ordered net.

\section{Difference-of-convex structure of the hard family}
\label{app:dc}

A function is \emph{difference-of-convex} (DC) if $f=p-q$ with $p$ and $q$ convex. Equivalently, $q$ is a convex correction making $f+q$ convex; it need not be unique. Each branch of the interpolant is convex, although their minimum need not be. Their sum gives a direct DC decomposition that also accommodates the lower truncation.

\Needspace{20\baselineskip}
\begin{proposition}[The finite hard instance is DC]
\label{prop:dc}
Every finite hard instance produced by Proposition~\ref{prop:transcript} is DC.

If $N=1$, then the construction is
\[
F(\bx)=\mathbf e_1^\top(\bx-\bq_1),
\qquad
f(\bx)
=
\max\left\{
\frac{L}{L_F}\mathbf e_1^\top(\bx-\bq_1),
-\Delta
\right\},
\]
and $f$ is convex. Taking $p=f$ and $q=0$ gives
\[
\Lip(q)=0=L(N-1),
\qquad
\Lip(p)\le L=LN.
\]

If $N\ge2$, let $F_1,\ldots,F_N$ be the branches in \eqref{eq:branch}, let $F=\min_i F_i$, and define
\[
H(\bx)=\sum_{i=1}^N F_i(\bx),
\qquad
K(\bx)=\max_{1\le i\le N}\sum_{j\ne i}F_j(\bx).
\]
Then $H$ and $K$ are convex and $F=H-K$. Moreover, with
\[
p(\bx)
=
\max\left\{
\frac{L}{L_F}H(\bx),
\frac{L}{L_F}K(\bx)-\Delta
\right\},
\qquad
q(\bx)=\frac{L}{L_F}K(\bx),
\]
the functions $p,q$ are convex and satisfy $f=p-q$. The displayed decomposition satisfies
\[
\Lip(q)\le L(N-1),
\qquad
\Lip(p)\le LN.
\]
\end{proposition}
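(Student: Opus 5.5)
The proof is a direct verification. First, convexity of the pieces. For $N=1$, $f$ is a maximum of an affine function and a constant, hence convex. The decomposition $p=f$, $q=0$ then gives $\Lip(q)=0$, and $\Lip(p)\le L$ because $\Lip(f)\le L$ by Proposition~\ref{prop:transcript}(i).

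For $N\ge2$, each branch $F_i$ in \eqref{eq:branch} is convex. It is an affine function plus $\beta$ times the positive part of $\bx\mapsto\norm{\mathbf P(\bx-\bq_i)}-r$, which is a convex function (a norm composed with an affine map, shifted by a constant). The positive part of a convex function is convex, since it is the maximum of that function and $0$. Hence $H$ is convex as a finite sum, and $K$ is convex as a finite maximum of finite sums of convex functions. The identity $F=H-K$ follows from
\[
H-K=\min_i\Bigl(H-\sum_{j\ne i}F_j\Bigr)=\min_i F_i .
\]

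Next, the truncation. Write $c=L/L_F>0$. Then
\[
f=\max\{cF,-\Delta\}=\max\{cH-cK,-\Delta\}=\max\{cH,\,cK-\Delta\}-cK=p-q .
\]
The function $p$ is a maximum of two convex functions and $q=cK$ is convex, which proves the DC claim.

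Finally, the Lipschitz bounds. The proof of Lemma~\ref{lem:interpolation} already shows that each $F_i$ is $L_F$-Lipschitz. Hence each $\sum_{j\ne i}F_j$ is $(N-1)L_F$-Lipschitz. A maximum of $M$-Lipschitz functions is $M$-Lipschitz, so $\Lip(K)\le(N-1)L_F$ and $\Lip(q)\le L(N-1)$. Likewise $\Lip(cH)\le cNL_F=LN$ and $\Lip(cK-\Delta)\le L(N-1)\le LN$, so $\Lip(p)\le LN$.

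There is no real obstacle here. The only care needed is the algebraic identity moving the truncation inside the max, and noting that the per-branch bound $L_F$ was established for each $F_i$ individually and not only for the minimum $F$.
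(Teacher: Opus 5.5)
Your proposal is correct and follows the paper's proof essentially step for step. Like the paper, you use convexity of each branch, the identity $K=H-\min_i F_i$, the rewriting $\max\{cH-cK,-\Delta\}=\max\{cH,cK-\Delta\}-cK$, and the per-branch $L_F$-Lipschitz bound from the proof of Lemma~\ref{lem:interpolation}, summed over branches and then maximized.
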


\begin{proof}
The case $N=1$ is given in the statement. For $N\ge2$, every branch is convex: its penalty is the maximum of a convex norm-minus-constant function and zero, and its remaining term is affine. Thus $H$ and $K$ are convex, and
\[
K=\max_i(H-F_i)=H-\min_iF_i=H-F.
\]
Substitution into \eqref{eq:truncation} gives
\[
f=\max\left\{\frac{L}{L_F}(H-K),-\Delta\right\}
=\max\left\{\frac{L}{L_F}H,\frac{L}{L_F}K-\Delta\right\}
-\frac{L}{L_F}K=p-q.
\]
The functions $p$ and $q$ are convex. Since each $F_i$ is $L_F$-Lipschitz, $H$ is $NL_F$-Lipschitz and $K$ is $(N-1)L_F$-Lipschitz. Scaling and taking the finite maximum give the claimed bounds on $\Lip(p)$ and $\Lip(q)$.
\end{proof}

\begin{remark}[Comparison with the Kong--Lewis modulus]
\label{rem:dc-modulus}
Using the notation and DC comparison in Section~\ref{sec:scope}, the correction in Proposition~\ref{prop:dc} gives
\[
\Lambda(\delta)
\le
M_{\rm DC}(f)
\le
\Lip(q)
\le
L(N-1).
\]
This is an upper bound only; it does not establish a lower bound proportional to $N$ for either intrinsic quantity.

For $N\ge2$, the translation identity gives $K(\bx+t\bu)=K(\bx)+(N-1)\alpha t$ for the maximum of the branch sums defining $K$. The displayed correction $q=(L/L_F)K$ therefore satisfies
\[
q(\bx+t\bu)-q(\bx)=\frac{L\alpha(N-1)}{L_F}t.
\]
Because $\norm{\bu}=1$, this gives
\[
\frac{L\alpha(N-1)}{L_F}
\le
\Lip(q)
\le
L(N-1).
\]
For the parameter choice in Lemma~\ref{lem:parameter-choice}, $\alpha/L_F>\epsilon/L$. Thus, for $N\ge2$ and a fixed ratio $\epsilon/L>0$, the Lipschitz constant of this correction is bounded above and below by positive constants times $LN$. These constants can depend on $\epsilon/L$, but not on $d$ or $N$. The lower bound concerns only the displayed correction and does not compare it with all other admissible corrections. Estimating $M_{\rm DC}(f)$ over all corrections, or estimating $\Lambda(\delta)$, requires a separate argument.

\end{remark}

\Needspace{8\baselineskip}

\section{A non-weakly-convex instance}
\label{app:weak-convexity}

A two-branch member of the construction has a downward cusp along a line. Adding a quadratic leaves the jump in the one-sided derivatives unchanged and cannot make the restriction convex.

\begin{proposition}[A two-branch function that is not weakly convex]
\label{prop:not-weakly-convex}
The interpolation formula \eqref{eq:F}, followed by the truncation \eqref{eq:truncation}, can define a function that is not $\varrho$-weakly convex for any finite $\varrho\ge0$.
\end{proposition}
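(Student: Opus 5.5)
We will exhibit an explicit two-branch instance and find a line on which its restriction has a concave kink. Fix $d\ge3$ and take $N=2$ with $\bq_1=\mathbf0$ and $\bq_2=s\bw$ for some unit $\bw$. Choose admissible $\alpha,\gamma,\beta$ and a unit $\bu$ as in Lemma~\ref{lem:direction}, for instance with $\ip{\bu}{\bw}=0$, so that $\norm{\mathbf P\bw}=1$. We also take $\bw\perp\mathbf e_1$, which makes the affine parts of the two branches agree. Let $r$ be given by \eqref{eq:parameters}.

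Along the segment from $\bq_1$ to $\bq_2$, the restriction $t\mapsto F(t\bw)$ is then
\[
\min\bigl\{\beta(t-r)_+,\ \beta(s-t-r)_+\bigr\},\qquad t\in[0,s].
\]
This function has a strict local maximum with a corner at $t=s/2$. Its one-sided slopes there are $+\beta$ from the left and $-\beta$ from the right. At this point both penalties are active, because $s/2>r$ holds by the formula for $r$.

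The scaled function $\phi=(L/L_F)F$ has the same corner, with derivative jump $-2\beta L/L_F<0$. We then check that the truncation is inactive near $\bx^\star=(s/2)\bw$. Since $\phi(\bx^\star)=(L/L_F)\beta(s/2-r)>0>-\Delta$, continuity gives $f=\phi$ on a neighborhood of $\bx^\star$.

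Now suppose, for contradiction, that $f$ is $\varrho$-weakly convex, meaning that $f+\tfrac{\varrho}{2}\norm{\cdot}^2$ is convex. Its restriction $g(t)=f(t\bw)+\tfrac{\varrho}{2}t^2$ to the line would then be convex. A convex function on $\RR$ has nondecreasing one-sided derivatives, so $g'_+(s/2)\ge g'_-(s/2)$ would have to hold. The quadratic term is differentiable, so it does not change the jump. The jump of $g$ is therefore still $-2\beta L/L_F<0$, which is a contradiction for every finite $\varrho$.

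The step that needs the most care is building a valid direction $\bu$ with $\ip{\bu}{\mathbf e_1}=\alpha$ and $\ip{\bu}{\bw}=0$, together with \eqref{eq:avoidance}. Choosing $\bu=\alpha\mathbf e_1+s_\alpha\mathbf e_3$ and $\bw=\mathbf e_2$ does this explicitly. Then $|\ip{\bu}{\bq_1-\bq_2}|=0\le\gamma s$, so the construction of Lemma~\ref{lem:interpolation} applies verbatim.

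Finally, on a small neighborhood of $t=s/2$ along this line, each branch is affine in $t$ with the slopes stated above. This follows because $\norm{\mathbf P(t\bw)}=t$ and $\mathbf e_1^\top\bw=0$, which confirms the one-sided derivative computation.
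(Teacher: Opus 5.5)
Your proof is correct and is essentially the paper's argument: take two centers whose difference is orthogonal to both $\bu$ and $\mathbf e_1$, so that on the line through them $f$ has a concave corner at the midpoint with the truncation inactive, and no added quadratic can remove that corner. The paper only fixes concrete values ($\alpha=1/2$, $\gamma=3/4$, $\beta=2$, centers $\pm a\mathbf e_3$, and $\bu$ in the $\mathbf e_1,\mathbf e_2$-plane), and, as in your argument, it chooses $\bu$ explicitly, so the cardinality condition of Lemma~\ref{lem:direction} is never needed.
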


\begin{proof}
Recall that $f$ is $\varrho$-weakly convex if $f(\bx)+(\varrho/2)\norm{\bx}^2$ is convex. Fix $d\ge3$ and $a>0$, and take $\alpha=1/2$, $\gamma=3/4$, and $\beta=2$. Then $c_\gamma=\sqrt7/4$, $\beta c_\gamma>1$, and $L_F=\sqrt{5+2\sqrt3}$. Choose
\[
\bu=\frac12\mathbf e_1+\frac{\sqrt3}{2}\mathbf e_2,
\qquad \bq_+=a\mathbf e_3,\qquad \bq_-=-a\mathbf e_3,\qquad S=\{\bq_+,\bq_-\}.
\]
The two-point set $S$ has only one pairwise difference up to sign, namely $\bq_+-\bq_-=2a\mathbf e_3$. It is perpendicular to $\bu$, and Lemma~\ref{lem:interpolation} applies directly. Its radius is
\[
s_*=2a,\qquad r=\frac{\sqrt7-2}{4}a,\qquad 0<r<a.
\]
Since $\mathbf P\mathbf e_3=\mathbf e_3$ and $\mathbf e_1\perp\mathbf e_3$, the affine terms on the line $\bx=t\mathbf e_3$ satisfy
\[
\mathbf e_1^\top(t\mathbf e_3-\bq_+)=0,\qquad
\mathbf e_1^\top(t\mathbf e_3-\bq_-)=0.
\]
The projected distances are $|t-a|$ and $|t+a|$. For $|t|<a-r$,
\[
F(t\mathbf e_3)=2\min\{|t-a|-r,|t+a|-r\}
=2(a-r)-2|t|>0.
\]
Set $L=\Delta=1$ and $f(\bx)=\max\{F(\bx)/L_F,-1\}$. The truncation is inactive on this interval, giving
\begin{equation}
f(t\mathbf e_3)=\frac{2(a-r)}{L_F}-\frac{2}{L_F}|t|.
\label{eq:downward-cusp}
\end{equation}
For any finite $\varrho\ge0$, \eqref{eq:downward-cusp} shows that the restriction $\psi(t)=f(t\mathbf e_3)+(\varrho/2)t^2$ satisfies
\[
\psi'_-(0)=\frac{2}{L_F}>-\frac{2}{L_F}=\psi'_+(0),
\]
contrary to the ordering of one-sided derivatives of a convex function. Thus no finite weak-convexity parameter is possible.
\end{proof}

This example concerns the interpolation family, not a separate low-dimensional query lower bound. It uses the directional assumptions of Lemma~\ref{lem:interpolation} directly, without invoking the cardinality condition of Lemma~\ref{lem:direction}.

\begingroup
\phantomsection

\endgroup

\end{document}